\begin{document}


\newtheorem{theorem}{Theorem}[section]
\newtheorem{definition}[theorem]{Definition}
\newtheorem{lemma}[theorem]{Lemma}
\newtheorem{proposition}[theorem]{Proposition}
\newtheorem{corollary}[theorem]{Corollary}
\newtheorem{example}[theorem]{Example}
\newtheorem{remark}[theorem]{Remark}

\hfuzz5pt 


\newcommand{\gt}{\tilde{g}}
\newcommand{\ft}{\tilde{f}}
\newcommand{\R}{\mathbb{R}}
\newcommand{\Z}{\mathbb{Z}}
\newcommand{\Zt}{\mathbb{Z}^2}
\newcommand{\Zd}{\mathbb{Z}^d}
\newcommand{\Ztd}{0\mathbb{Z}^{2d}}
\newcommand{\Rt}{\R^2}
\newcommand{\Rtd}{\R^{2d}}
\newcommand{\Zp}{Z_p}
\newcommand{\al}{\alpha}
\newcommand{\be}{\beta}
\newcommand{\om}{\omega}
\newcommand{\ga}{\gamma}
\newcommand{\de}{\delta}
\newcommand{\la}{\lambda}
\newcommand{\La}{\Lambda}
\newcommand{\ala}{\la^\circ}
\newcommand{\aLa}{\La^\circ}
\newcommand{\nat}{\natural}
\newcommand{\G}{\mathcal{G}}
\newcommand{\A}{\mathcal{A}}
\newcommand{\B}{\mathcal{B}}
\newcommand{\W}{\mathcal{W}}
\newcommand{\V}{\mathcal{V}}
\newcommand{\M}{\mathcal{M}}
\newcommand{\Sp}{\mathcal{S}}
\newcommand{\Hp}{\mathcal{H}}
\newcommand{\ka}{\kappa}
\newcommand{\cast}{\circledast}
\newcommand{\id}{\mbox{Id}}
\newcommand{\hx}{\hat{x}}
\newcommand{\ie}{{\em i.e., }}
\newcommand{\eg}{{\em e.g., }}

\newcommand{\bV}{\boldsymbol{V}}
\newcommand{\bPhi}{\boldsymbol{\Phi}}
\newcommand{\bI}{\boldsymbol{I}}

\newcommand{\lo}{{\ell^1}}
\newcommand{\Lo}{{L^1}}
\newcommand{\Lt}{{L^2}}
\newcommand{\SO}{{S_0}}
\newcommand{\Lp}{{L^p}}
\newcommand{\Los}{{L^1_s}}
\newcommand{\WCl}{{W(C_0,\ell^1)}}
\newcommand{\lt}{{\ell^2}}

\newcommand{\conv}[2]{{#1}\,\ast\,{#2}}
\newcommand{\twc}[2]{{#1}\,\nat\,{#2}}
\newcommand{\mconv}[2]{{#1}\,\cast\,{#2}}
\newcommand{\set}[2]{\Big\{ \, #1 \, \Big| \, #2 \, \Big\}}
\newcommand{\inner}[2]{\left< #1,#2\right>}
\newcommand{\dotp}[2]{ #1 \, \cdot \, #2}

\newcommand{\Zpd}{\Zp^d}
\newcommand{\I}{\mathcal{I}}
\newcommand{\Zq}{Z_q}
\newcommand{\Zqd}{Zq^d}
\newcommand{\Zak}{\mathcal{Z}_{a}}
\newcommand{\C}{\mathbb{C}}
\newcommand{\F}{\mathcal{F}}

\newcommand{\convL}[2]{{#1}\,\ast_L\,{#2}}
\newcommand{\abs}[1]{\lvert#1\rvert}
\newcommand{\absbig}[1]{\big\lvert#1\big\rvert}
\newcommand{\scp}[1]{\langle#1\rangle}
\newcommand{\norm}[1]{\lVert#1\rVert}
\newcommand{\normbig}[1]{\big\lVert#1\big\rVert}
\newcommand{\normBig}[1]{\Big\lVert#1\Big\rVert}


\title{Non-Invertible Gabor Transforms}

\author{Ewa Matusiak, Tomer Michaeli,~\IEEEmembership{Student~Member,~IEEE}, and Yonina C. Eldar,~\IEEEmembership{Senior~Member,~IEEE}\thanks{This work has been submitted to the IEEE for possible publication.
Copyright may be transferred without notice, after which this version may no longer be accessible.}\thanks{This work
was supported in part by the Israel Science Foundation under Grant no. 1081/07 and by the European Commission in the
framework of the FP7 Network of Excellence in Wireless COMmunications NEWCOM++ (contract no. 216715).}
\thanks{The authors are with the department of electrical engineering, Technion--Israel Institute of
Technology, Haifa, Israel (phone: +972-4-8294682, fax: +972-4-8295757, e-mail: ewa.matusiak@univie.ac.at,
tomermic@tx.technion.ac.il, yonina@ee.technion.ac.il). The third author is also with the University of Vienna.}}


\maketitle


%


\begin{abstract}
Time-frequency analysis, such as the Gabor transform, plays an important role in many signal processing applications.
The redundancy of such representations is often directly related to the computational load of any algorithm operating
in the transform domain. To reduce complexity, it may be desirable to increase the time and frequency sampling
intervals beyond the point where the transform is invertible, at the cost of an inevitable recovery error. In this
paper we initiate the study of recovery procedures for non-invertible Gabor representations. We propose using fixed
analysis and synthesis windows, chosen \eg according to implementation constraints, and to process the Gabor
coefficients prior to synthesis in order to shape the reconstructed signal. We develop three methods to tackle this
problem. The first follows from the consistency requirement, namely that the recovered signal has the same Gabor
representation as the input signal. The second, is based on the minimization of a worst-case error criterion. Last,
we develop a recovery technique based on the assumption that the input signal lies in some subspace of $L_2$. We show
that for each of the criteria, the manipulation of the transform coefficients amounts to a 2D twisted convolution
operation, which we show how to perform using a filter-bank. When the under-sampling factor is an integer, the
processing reduces to standard 2D convolution. We provide simulation results to demonstrate the advantages and
weaknesses of each of the algorithms.
\end{abstract}

\maketitle


\section{Introduction}

Time-frequency analysis has become a popular tool in signal processing. During the past three decades, it has been
successfully used for noise suppression \cite{EM84,CB01}, blind source separation \cite{BA98}, echo cancelation
\cite{LM99,ACV01,AG99}, relative transfer function identification \cite{C04}, beamforming in reverberant environments
\cite{GBW01}, system identification in general \cite{AC07,AC08}, and more. In algorithms based on time-frequency
transforms such as the Gabor representation, there is often a tradeoff between performance and computational
complexity, which can be controlled by adjusting the redundancy of the transform. The latter is determined by the
product of the sampling intervals in time and frequency, which we denote by $a$ and $b$ respectively. Specifically,
as $a$ and $b$ are increased, there are less coefficients per time unit for any given frequency range, and therefore
the amount of computation needed to process the them decreases. This effect is especially notable in adaptive
algorithms, where $a$ directly affects the convergence rate.

The signal processing literature on Gabor-domain algorithms heavily relies on the fundamental requirement that any
signal can be recovered from its coefficients in the transform domain. This requirement leads to the upper bound
$ab\leq1$. However, since the performance-complexity tradeoff is of continuous nature, it seems very restrictive to
limit the discussion to this regime. Specifically, by increasing the sampling intervals beyond this bound we may
further reduce the computational load of any algorithm operating in the Gabor domain. This benefit is obtained at the
expense of additional deterioration in performance. It is important to note that when $ab>1$, an additional source of
performance degradation comes into play, which is the inherent reconstruction error. This is because in this regime,
we can only guarantee perfect reconstruction for signals lying in certain subspaces of $\Lt$, as we show in this
paper, but not for the entire space. Nevertheless, the resulting complexity reduction may be of greater value in some
applications.

In this paper, we explore reconstruction techniques for non-invertible Gabor transforms, namely in which $ab\geq1$.
The fact that in these cases perfect recovery cannot be guaranteed for every signal introduces extra flexibility in
choosing the analysis and synthesis windows of the transform. Specifically, we address the case where both the
analysis and synthesis windows of the transform are specified in advance. They can be chosen according to
implementation considerations, for example finite support windows, or certain multiple-pole windows \cite{TZ96}
admitting an efficient recursive implementation. Our goal, then, is to process the transform coefficients before
reconstruction such that the recovered signal possesses certain desired properties.

To tackle this problem, we borrow several approaches from the field of sampling theory, which has reached a high
degree of maturity in recent years \cite{U00,EM09}. We begin by employing the \emph{consistency} criterion in which
the recovered signal $\ft(t)$ is constructed such that its Gabor transform coincides with that of the original signal
$f(t)$ \cite{UA94}. We then proceed to analyze a \emph{minimax} strategy, where the reconstruction error $\|\ft-f\|$
is minimized for the worst-case input $f(t)$ \cite{ED04}. Both these approaches are prior-free in the sense that they
do not make use of any special properties, or prior knowledge, that might be available on the signal.

A prevalent prior in the sampling literature, is that the signal to be recovered lies in a shift invariant (SI)
subspace of $\Lt$ (see \eg \cite{UAE92,UAE01,AG01,A02,CE04,UB05} and references therein). In fact, signals and images
encountered in many applications can be quite accurately modeled as belonging to some SI space \cite{U00,EM09}, such
as the space of bandlimited functions, the space of polynomial splines and more. Their widespread use can also be
attributed to the link that subspace priors have with stationary stochastic processes \cite{UN05,EU06,RA08,MI08},
which have been shown to constitute realistic priors for the behavior of natural images \cite{RA06}. In this paper,
we generalize the SI-prior used in the sampling community to a richer type of subspaces of $\Lt$, which we term
\emph{shift-and-modulation} (SMI) invariant spaces. The third class of inverse Gabor techniques we consider, then,
makes use of the SMI prior. We show that such a prior can lead to perfect recovery in some cases, given that the
synthesis window is chosen according to the prior. For a fixed synthesis window, which is not matched to the prior,
we show how to achieve the minimal possible reconstruction error for signals in the prior-space.

In each of the three techniques we develop, the processing of the Gabor coefficients amounts to a 2D
twisted-convolution \cite{EMW07} with a certain kernel, which depends on the analysis and synthesis windows. We show
that the twisted-convolution operation can be interpreted in terms of a filter-bank. Furthermore, in the case of
integer under-sampling (\ie when $ab$ is an integer), the resulting process reduces to a standard 2D convolution in
the time-frequency domain. In these cases, we discuss situations in which the 2D convolution kernel is a separable
function of time and frequency. This allows a significant reduction in computation, namely by implementing the 2D
convolution as two successive 1D filtering operations along the time and frequency directions.

%
%
%

The paper is organized as follows. Section~\ref{sec:not} is devoted to notation that will be used throughout the
paper. In Section~\ref{sec:GaborRiesz} we derive conditions on the analysis and synthesis windows such that they
generate Riesz bases for their span, which guarantees that the non-invertible Gabor representation is stable. In
Section \ref{sec:SI} we review sampling and reconstruction schemes in shift-invariant (SI) spaces in order later to
be able to generalize them to the Gabor transform using SMI spaces. Sections~\ref{sec:GaborInt},
\ref{sec:GaborRational} and \ref{sec:subspaceGabor} constitute the central part of the paper, where in the first two
we develop prior-free recovery procedures for Gabor transforms in the integer and rational under-sampling regimes
respectively, and in the last we discuss SMI-prior recoveries. We devote Section~\ref{sec:twc} to describing how
twisted convolution can be realized as a filter-bank and also how to obtain the inverse of a sequence with respect to
twisted convolution. Finally, in Section~\ref{sec:example} we demonstrate the methods we develop for the case in
which both the analysis and synthesis are performed with Gaussian windows.


\section{Notation and Definitions}\label{sec:not}

We will be working throughout the paper with the Hilbert space of complex square integrable functions, denoted by
$\Lt(\R)$, with inner product
\begin{equation}
\inner{f}{g} = \int_{-\infty}^{\infty} f(t) \overline{g(t)}dt \quad \mbox{for all} \quad f,g\in\Lt(\R),
\end{equation}
where $\overline{g(t)}$ denotes the complex conjugate of $g(t)$. The norm, induced by this inner product, is given by
\begin{equation}
\norm{f}^2 = \inner{f}{f}\,.
\end{equation}
The Fourier transform of $f\in\Lt(\R)$ is defined as
\begin{equation}
\F f(\om) = \int_{-\infty}^{\infty} f(t) e^{-2\pi i t \om}\,dt.
\end{equation}
For convenience, we will sometimes write $\hat{f}$ for $\F f$.

In order to ensure stable recovery we focus on subspaces of $\Lt(\R)$ which are generated by frames or Riesz bases. A
collection of elements $\{s_k\}_{k\in\Z}$ is a \emph{frame} for its closed linear span if there exist constants $A>0$
and $B<\infty$ such that
\begin{equation}
A\norm{f}^2 \leq \sum_{k\in\Z} \abs{\inner{f}{s_k}}^2 \leq B\norm{f}^2 \quad \mbox{for all} \quad f\in \overline{\rm
span}\{ s_k \},
\end{equation}
where $\overline{\rm span}$ denotes the closed linear span of a set of vectors. The vectors $\{s_k\}_{k\in\Z}$ form a
\emph{Riesz basis} if there exist $A>0$ and $B<\infty$ such that for all sequences $c\in\lt$
\begin{equation}
A\norm{c}_{\lt}^2 \leq \normBig{\sum_{k\in\Z}c_k s_k }^2 \leq
B\norm{c}_{\lt}^2 \,,
\end{equation}
where $\norm{c}_{\lt}^2 = \sum_{k\in\Z} \abs{c_k}^2$ is the squared $\lt$-norm of $c_k$. A direct consequence of the
lower inequality is that the basis functions are linearly independent, which means that every function in
$\overline{\rm span}\{ s_k \}$ is uniquely specified by its coefficients $c_k$.

The fundamental building blocks of the Gabor representation are the so-called Gabor systems. To define a Gabor
system, let $a>0$ and $b>0$ be such that $ab=q/p$ with $p$ and $q$ relatively prime, and let $T_{ak}$ and $M_{bl}$,
for $k,l\in\Z$, be the translation and modulation operators given by
\begin{eqnarray}
T_{ak} f(t) &=& f(t-ak)\,;\\
M_{bl} f(t) &=& e^{2\pi i bl t} f(t)\,.
\end{eqnarray}
For $s\in\Lt(\R)$, the \emph{Gabor system} $\G(s,a,b)$ is a collection $\{M_{bl} T_{ak} s(t)\,;\, (k,l) \in \Zt \}$.
The composition
\begin{equation}
M_{bl}T_{ak}f(t) = e^{2 \pi i bl t} f(t-ak),
\end{equation}
which is a unitary operator, is called a
\emph{time-frequency shift operator}. Many technical details in time-frequency analysis are linked to the commutation
law of the translation and modulation operators, namely
\begin{equation}
M_{bl}T_{ak} = e^{2 \pi i abkl}T_{ak}M_{bl}.
\end{equation}
When $p=1$, the time-frequency shift operators commute, i.e. $M_{bl}\,T_{ak} = T_{ak}\,M_{bl}$, because $e^{2 \pi i
abkl}=1$ for all $k,l\in\Z$. One consequence of the commutation rule, which we will use in our exposition, is the
relation
\begin{equation}
\inner{f}{M_{bl-bn}T_{ak-am}f} = e^{2 \pi i ab(l-n)m} \inner{M_{bn}T_{am}f}{M_{bl}T_{ak}f}.
\end{equation}
When $p=1$ this becomes $\inner{f}{M_{bl-bn}T_{ak-am}f} = \inner{M_{bn}T_{am}f}{M_{bl}T_{ak}f}$.

For $s\in\Lt(\R)$, the collection $\G(s,a,b)$ is a Riesz basis for
its closed linear span if there exist bounds $A>0$ and $B<\infty$
such that
\begin{equation}
A\norm{c}_{\lt}^2 \leq \normBig{\sum_{k,l\in\Z} c_{k,l}M_{bl}T_{ak}s }^2 \leq B\norm{c}_{\lt}^2 \qquad c \in \lt,
\end{equation}
and is a frame when
\begin{equation}
A\norm{f}^2 \leq \sum_{k,l\in\Z} \abs{\inner{f}{M_{bl}T_{ak}s}}^2 \leq B\norm{f}^2 \quad \mbox{for all} \quad f\in
\overline{\rm span}\{M_{bl}T_{ak}s \}.
\end{equation}
A necessary condition for $\G(s,a,b)$ to constitute a frame for $\Lt(\R)$ is that $ab\leq 1$, \cite{G01}. Moreover,
if $\G(s,a,b)$ is a frame, then it is a Riesz basis for $\Lt(\R)$ if and only if $ab=1$ \cite{G01}. In this paper we
focus on the regime $ab\geq 1$, where $\G(s,a,b)$ does not necessarily span $\Lt(\R)$.

With a Gabor system $\G(s,a,b)$ we associate a \emph{synthesis operator} (or \emph{reconstruction operator}) $S:
\lt(\Zt) \rightarrow \Lt(\R)$, defined as
\begin{equation}
Sc = \sum_{k,l\in \Z} c_{k,l} M_{bl} T_{ak} s(t) \quad \mbox{for every} \quad c\in\lt(\Zt).
\end{equation}
The conjugate $S^{\ast}:\Lt(\R) \rightarrow \lt(\Zt)$ of $S$ is called the \emph{analysis operator} (or
\emph{sampling operator}), and is given by
\begin{equation}
S^{\ast}f = \{\inner{f}{M_{bl} T_{ak} s}\} \quad \mbox{for every} \quad f\in\Lt(\R).
\end{equation}


\section{Stable Gabor representations}\label{sec:GaborRiesz}

The Gabor representation of a signal $f(t)$ comprises the set of coefficients $\{c_{k,l}\}_{k,l\in\Z}$ obtained by
inner products with the elements of some Gabor system $\G(s,a,b)$ \cite{G01}:
\begin{equation}
c_{k,l} = \inner{f}{M_{bl} T_{ak} s}. \label{eq:analysis}
\end{equation}
This process can be represented as an analysis filter-bank, as shown in Fig.~\ref{fig:GaborSampRec}(a). Consequently,
$s(t)$ is referred to as the analysis window of the transform. If $\G(s,a,b)$ constitutes a frame or Riesz basis for
$\Lt(\R)$, then there exists a function $v(t)\in\Lt(\R)$ such that any $f(t)\in\Lt(\R)$ can be reconstructed from the
coefficients $\{c_{k,l}\}_{k,l\in\Z}$ using the formula
\begin{equation}
f(t) = \sum_{k,l\in\Z}c_{k,l}M_{bl}T_{ak} v(t). \label{eq:synthesis}
\end{equation}
The Gabor system $\G(v,a,b)$ is the dual frame (Riesz basis) to $\G(s,a,b)$. Consequently, the synthesis window
$v(t)$ is referred to as the dual of $s(t)$. The recovery process can be represented as a synthesis filter-bank, as
shown in Fig.~\ref{fig:GaborSampRec}(b).

Generally, there is more than one dual window $v(t)$. It can be shown that any function $v(t)$ satisfying
$\inner{v}{M_{l/a}T_{k/b}s} = \delta_{k}\delta_{l}$ is a dual window. The canonical dual window is given by
$v=Q^{-1}s$, where $Q$ is the frame operator associated to $s(t)$, which is defined by
$Qf=\sum_{k,l\in\Z}\inner{f}{M_{bl}T_{ak}s}M_{bl}T_{ak}s(t)$. There are several ways of finding an inverse of $Q$, namely
by employing the Janssen representation of $Q$, through the Zak transform method or iteratively using one of several
available efficient algorithms \cite{G01}.

\begin{figure*}\centering
\subfloat[Analysis filter bank]{\includegraphics[scale=0.725, trim=0cm 0cm -1cm 0cm ]{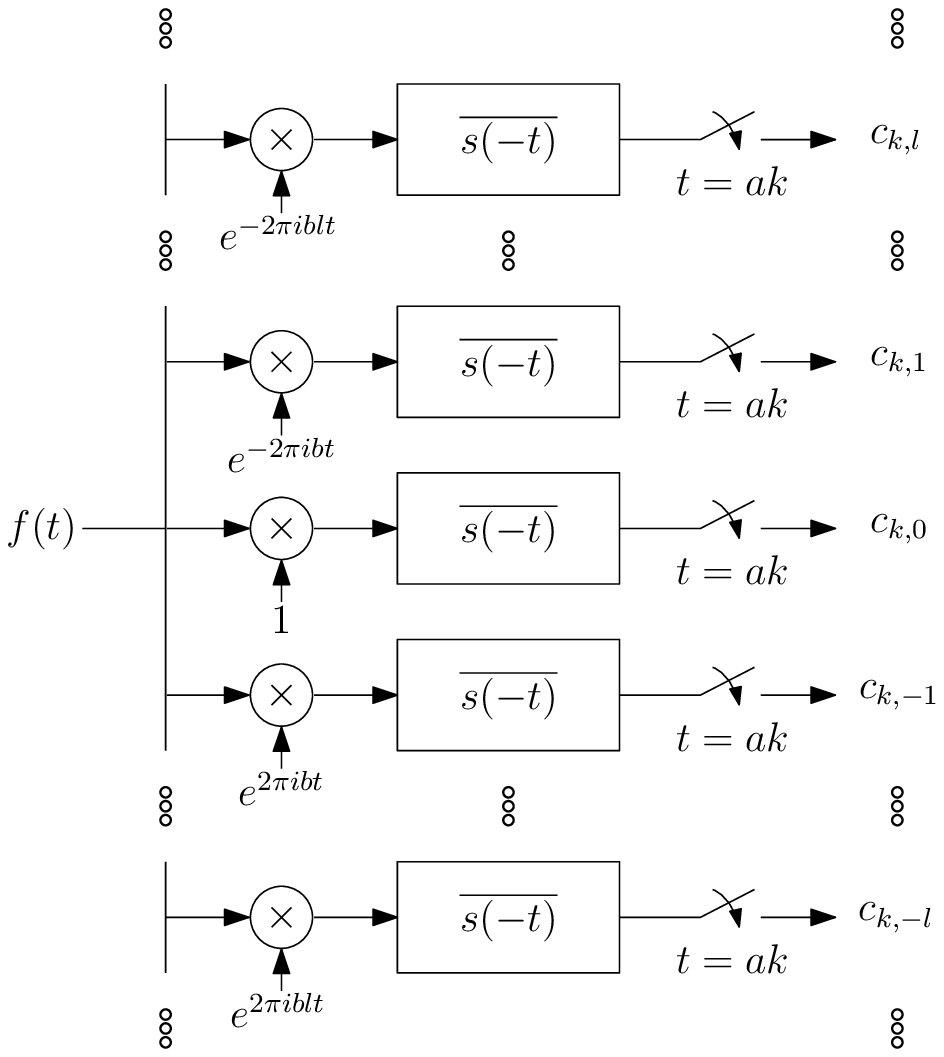}}
\subfloat[Synthesis filter bank]{\includegraphics[scale=0.725, trim=-1cm 0cm 0cm 0cm ]{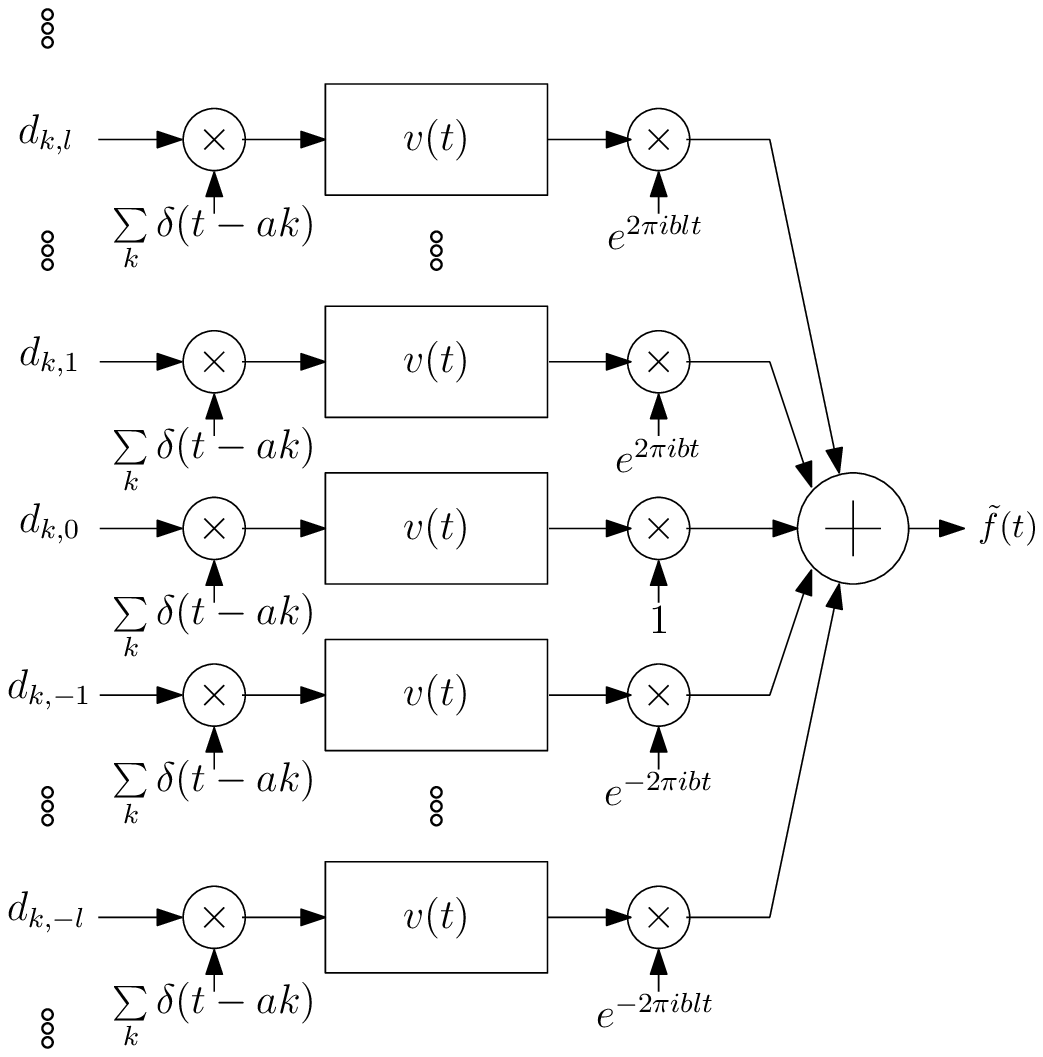}}\\
\caption{Filter-bank representation of the Gabor transform~(a) and of the inverse Gabor transform~(b).}
\label{fig:GaborSampRec}
\end{figure*}

In this paper, we are interested in Gabor systems that do not necessarily span $\Lt(\R)$ but rather only a (Gabor)
subspace. A \emph{Gabor space} is the set $\V$ of all signals that can be expressed in the form (\ref{eq:synthesis})
with some norm-bounded sequence $c_{k,l}$. Since perfect recovery cannot be guaranteed for every signal in $\Lt(\R)$
in these situations, we have the freedom of choosing the analysis and synthesis windows according to implementation
constraints. However, in order for the analysis and synthesis processes to be stable, we would still like to assure
that the systems $\G(s,a,b)$ and $\G(v,a,b)$ form frames or Riesz bases for their span. In this section, we give
several equivalent characterizations of windows $v(t)$ and sampling intervals $a$ and $b$ such that the Gabor system
$\G(v,a,b)$ forms a Riesz basis.

For tractability, we assume throughout the paper that $a$ and $b$ are positive constants such that $ab=q/p$, where
$p$ and $q$ are relatively prime. Moreover, we will consider only Gabor spaces whose generators come from the
so-called \emph{Feichtinger algebra} $\SO$, which is defined by
\begin{equation}
\SO = \set{f \in \Lt(\R)}{\norm{f}_{\SO} := \int \abs{\inner{f}{M_{\om} T_x \psi}} \, dx\, d\om < \infty},
\end{equation}
where $\psi(t)$ is a Gussian window. An important property of $\SO$ is that if $v(t)$ and $s(t)$ are elements from
$\SO$ then $\{\inner{v}{M_{bl} T_{ak} s}\}_{k,l\in\Z}$ is an $\lo(\Zt)$ sequence. Examples of functions in $\SO$ are
the Gaussian and B-splines of any order. The Feichtinger algebra is an extremely useful space of ``good'' window
functions in the sense of time-frequency localization. Rigorous descriptions of $\SO$ can be found in \cite{G01} and
references therein.

The first characterization of Gabor Riesz bases we consider is
stated directly in terms of their generator $v(t)$. It is a simple
corollary of a result on Gabor frames for $\Lt(\R)$, see \cite{G01}.
\begin{proposition}\label{prop:RieszV}
Let $v(t)\in\SO$ and $ab=q/p$ with $p$ and $q$ relatively prime. The collection $\G(v,a,b)$ is a Riesz basis for its
closed linear span if and only if there exist constants $A>0$ and $B<\infty$ such that
\begin{equation}\label{eq:riesz condition}
A\bI_p \leq \bV(\om,x) \leq B\bI_p \quad \text{for almost all } (\om,x)\in\R^2,
\end{equation}
where $\bI_p$ is the $p\times p$ identity matrix and $\bV(\om,x)$ is a $p\times p$ matrix-valued function with
entries given by
\begin{equation}\label{eq:V-hat}
\bV_{r,s}(\om,x) = \frac{1}{b} \sum_{k,l\in\Z} v\left(x - ar - \frac{qk+l}{b}\right) \overline{v\left(x - as -
\frac{l}{b}\right)} e^{-2\pi i ak \om}, \quad r,s=0,\ldots,p-1.
\end{equation}
$\G(v,a,b)$ is an orthonormal basis if $A=B=1$.
\end{proposition}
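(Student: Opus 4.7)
The plan is to translate the Riesz basis inequality into a spectral condition on the Gram operator $S^{\ast}S$, and then to diagonalize $S^{\ast}S$ via a Zak-type fiberization adapted to the rational lattice $ab=q/p$.

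First, I would observe that $\G(v,a,b)$ is a Riesz basis for its closed linear span with bounds $A,B$ if and only if the synthesis map $S:\lt(\Zt)\to\Lt(\R)$ satisfies $A\norm{c}_{\lt}^2 \le \norm{Sc}^2 \le B\norm{c}_{\lt}^2$ for every $c\in\lt(\Zt)$; equivalently, $A\,I \le S^{\ast}S \le B\,I$ on $\lt(\Zt)$. This reduces the statement to an operator-norm inequality.

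Next, I would apply the standard Zibulski--Zeevi (rational-lattice) fiberization: there is a unitary map from $\lt(\Zt)$ onto an $L^2$ space of $\C^p$-valued functions over a fundamental domain in $\R^2$, together with a corresponding unitary map on $\Lt(\R)$, under which $S$ becomes pointwise multiplication by a $p\times p$ matrix-valued symbol built from $v$. Because $v\in\SO$, the Zak-type series appearing in this construction converge absolutely, so all interchanges of summation below are legitimate. Pulling $S^{\ast}S$ through this unitary equivalence produces pointwise multiplication by a positive semidefinite $p\times p$ matrix $\bV(\om,x)$, and the spectral theorem for multiplication operators turns the operator inequality $A\,I \le S^{\ast}S \le B\,I$ into the pointwise matrix inequality $A\,\bI_p \le \bV(\om,x) \le B\,\bI_p$; the orthonormal case $A=B=1$ corresponds to $\bV(\om,x)=\bI_p$ almost everywhere.

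The main obstacle is to verify that the entries of the multiplication symbol agree with the explicit formula (\ref{eq:V-hat}). This amounts to expanding the Gram kernel $\inner{M_{bn}T_{am}v}{M_{bl}T_{ak}v}$, invoking the commutation rule for the time-frequency shifts, and regrouping the doubly-indexed sum over $(k,l)$ according to the splitting of $\Z$ into residue classes modulo $p$ induced by $ab=q/p$ (a Poisson-type summation in one of the variables is convenient here). One must track carefully how $p$ and $q$ enter the polyphase/Zak decomposition, so that the seemingly unusual argument $x-ar-(qk+l)/b$ in the first translate of $v$ in (\ref{eq:V-hat}) emerges in the correct form. As hinted in the statement, this is the same computation that underlies the matrix-symbol characterization of Gabor frames for $\Lt(\R)$ in \cite{G01}, and once the identification of $\bV$ is made the remainder is routine.
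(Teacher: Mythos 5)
Your plan is correct, but it takes a genuinely different route from the paper's. The paper never touches the Gram operator: it invokes the Ron--Shen duality principle to convert the Riesz-basis question for $\G(v,a,b)$ into a frame question for $\G(v,1/b,1/a)$ on the adjoint lattice, and then cites the known matrix-valued characterization of rational-lattice Gabor frames from \cite{G01}, which yields (\ref{eq:riesz condition})--(\ref{eq:V-hat}) directly; the whole proof is a few lines. You instead stay on the coefficient side, reducing the Riesz inequality to $A\,I\le S^{\ast}S\le B\,I$ on $\lt(\Zt)$ and fiberizing the Gram operator. That is viable --- it is essentially the programme the paper itself carries out afterwards, where the Gram operator is identified as twisted convolution with $r_{vv}$ and represented by the symbol $\bPhi(\om,x)$ (Proposition~\ref{prop:RieszPhi} and Appendix~\ref{sec:appPhiV}) --- but it obliges you to prove, rather than cite, that the fiberized Gram symbol equals the bracket-product expression (\ref{eq:V-hat}) written in terms of $v$ itself; that identification (your ``main obstacle'') is the real content, requiring a Walnut/Poisson-summation step to pass from $r_{vv}[k,l]=\inner{v}{M_{bl}T_{ak}v}$ to the periodized products of translates of $v$, with absolute convergence supplied by $v\in\SO$. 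The paper outsources exactly this identity to \cite{WES05}. Two points to tighten if you write it out: the synthesis operator itself fiberizes as a rectangular matrix symbol in the usual Zibulski--Zeevi normalization, so only the composite $S^{\ast}S$ is a square positive $p\times p$ matrix; and you should verify that the fiberization of $\lt(\Zt)$ is unitary onto its range so that the operator bounds genuinely become pointwise matrix bounds. In exchange, your route is self-contained and proves Proposition~\ref{prop:RieszPhi} along the way, whereas the paper's route buys brevity at the cost of two external citations.
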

\begin{proof}
By the Ron-Shen duality principle \cite{RS97}, $\G(v,a,b)$ is a Riesz basis (orthonormal basis) for its closed linear
span if and only if the system $\G(v,1/b,1/a)$ is a frame (Parseval frame) for $\Lt(\R)$. The latter is a frame for
$\Lt(\R)$ if and only if there exist constants $A>0$ and $B<\infty$ such that the so-called frame operator $S_{vv}$,
defined as $S_{vv}f(t) = \sum_{k,l\in\Z} \inner{f}{M_{l/a}T_{k/b}v} M_{l/a}T_{k/b}v(t)$ satisfies
\begin{equation}\label{eq:S_vv}
\frac{A}{b}I \leq S_{vv} \leq \frac{B}{b}I,
\end{equation}
where $I$ is the identity operator on $\Lt(\R)$. This means that $S_{vv}$ is bounded and invertible on $\Lt(\R)$. It
was shown in \cite{G01} that, since $1/(ab) = p/q$, the operator $S_{vv}$ satisfies (\ref{eq:S_vv}) if and only if
(\ref{eq:riesz condition}) is satisfied, which completes the proof.
\end{proof}
Note that $\om$ is a frequency variable associated with the discrete-time variable $k$, and similarly $x$ is a time
variable associated with the discrete frequency index $l$. Another valuable observation is that that $\bV(\om,x)$ is
a $(1/a,1/b)$-periodic function. Furthermore, it has been shown in \cite{G01} that $\bV_{r,s}(\om,x)$ is continuous.
Therefore, the lower bound in (\ref{eq:riesz condition}) can be replaced by the requirement that $\det(\bV(\om,x))>0$
for all $(\om,x)\in [0,1/a) \times [0,1/b)$.

The next characterization we consider is in terms of the twisted convolution operator. Specifically, the Riesz basis
condition implies that $\G(v,a,b)$ is a Riesz basis for its closed linear span if and only if there exist constants
$A>0$ and $B<\infty$ such that
\begin{equation}
A\inner{c}{c} \leq \inner{\twc{r_{vv}}{c}}{c} \leq B\inner{c}{c} \quad \mbox{for all } c\in\lt(\Zt),
\end{equation}
where the 2D cross-correlation sequence $r_{vv}[k,l]$ is defined as
\begin{equation}
r_{vv}[k,l] = \inner{v}{M_{bl}T_{ak}v}.
\end{equation}
The operation $\nat$ represents the twisted convolution defined by
\begin{equation}\label{eq:twc}
(\twc{d}{c})[k,l] = \sum_{m,n\in\Z} d_{m,n} c_{k-m,l-n} e^{-2\pi i ab(l-n)m}.
\end{equation}
When $p=1$, twisted convolution becomes standard convolution, because the exponential term in (\ref{eq:twc}) equals
$1$ for all $m,n,l\in\Z$. Therefore, $v(t)$ generates a Riesz basis if and only if the twisted convolution (standard
convolution when $p=1$) operator with kernel $r_{vv}[k,l]$ is bounded and invertible. Invertibility of this operator
translates to the invertibility of the sequence $r_{vv}[k,l]$ with respect to $\nat$ ($\ast$ respectively).
Proposition~\ref{prop:RieszV} states then, that this twisted convolution operator is bounded and invertible if and
only if the matrix-valued function $\bV(\om,x)$ is bounded and invertible for all $\om$ and $x$. Explicitly finding
the inverse of a sequence with respect to twisted convolution is not a trivial task. We will address the problem in
Section~\ref{sec:twc}.

Our last representation follows from restating Proposition \ref{prop:RieszV} using a different, but equivalent,
matrix-valued function that involves the cross-correlation sequence $r_{vv}[k,l]$ defined earlier. This new
representation was first introduced in \cite{WES05} to characterize the invertibility of general Gabor frame
operators.
\begin{proposition}{\cite{WES05}}\label{prop:RieszPhi}
The matrix-valued function $\bV(\om,x)$ of (\ref{eq:V-hat}) coincides with the matrix-valued function $\bPhi(\om,x)$
whose entries are given by
\begin{equation}\label{eq:Phi-def}
\bPhi_{r,s}(\om,x) = \sum_{k,l\in\Z} r_{vv}[s-r+pk,l] e^{-2\pi i ablr} e^{-2\pi i (blx + ak\om)},
\end{equation}
and therefore $\G(v,a,b)$ is a Riesz basis for its closed linear span if and only if there exist constants $A>0$ and
$B<\infty$ such that
\begin{equation}\label{eq:Phi-bounds}
A\bI_p \leq \bPhi(\om,x) \leq B\bI_p \quad \text{for almost all } (\om,x)\in\R^2.
\end{equation}
\end{proposition}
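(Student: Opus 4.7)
The plan is to reduce the proposition to the pointwise matrix identity $\bV(\om,x)=\bPhi(\om,x)$; once this identity is in hand, the equivalence of \eqref{eq:Phi-bounds} and \eqref{eq:riesz condition} is immediate from Proposition~\ref{prop:RieszV}. The whole task therefore becomes a direct Fourier/Poisson computation turning the expression \eqref{eq:V-hat}, which involves only the values of $v$, into the expression \eqref{eq:Phi-def}, which involves only the cross-correlation samples $r_{vv}[k,l]$.

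First, I would re-index the double sum in \eqref{eq:V-hat}. Using the bijection $(k,l)\mapsto(k,m)$ of $\Zt$ with $m=qk+l$, and the identity $q/b=ap$ (which is just another way of writing the standing hypothesis $ab=q/p$), the conjugated window reads
\begin{equation*}
v\!\left(x-as-\tfrac{l}{b}\right)=v\!\left(x-as+apk-\tfrac{m}{b}\right).
\end{equation*}
Hence for each fixed $k$ the inner sum in \eqref{eq:V-hat} is a $1/b$-periodization of the function
\begin{equation*}
h_{r,s,k}(t)=v(x-ar-t)\,\overline{v(x-as+apk-t)}.
\end{equation*}

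Second, I would invoke the Poisson summation formula to rewrite this periodization as $\sum_m h_{r,s,k}(m/b)=b\sum_n \hat{h}_{r,s,k}(bn)$. A change of variable $u=x-ar-t$ inside the Fourier integral defining $\hat{h}_{r,s,k}$ pulls out a phase $e^{-2\pi i bn(x-ar)}$ and leaves an integral of the form $\int v(u)\,\overline{v(u-a(s-r-pk))}\,e^{2\pi i bn u}\,du$, which is recognizable as a value of the cross-correlation sequence $r_{vv}[s-r-pk,\cdot]$. Assembling everything and relabeling the summation indices ($k\mapsto -k$, $n\mapsto -l$) to bring the shift argument into the form $s-r+pk$ used in \eqref{eq:Phi-def}, the phase factors $e^{-2\pi i ak\om}$, $e^{-2\pi i blx}$ and $e^{-2\pi i abl r}$ drop out in exactly the combination that defines $\bPhi_{r,s}(\om,x)$.

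The only real work will be bookkeeping: carefully tracking the phases through the two reparametrizations and through the Poisson step so that the signs in the final exponentials match \eqref{eq:Phi-def}, and verifying that the phase $e^{-2\pi i abl r}$ arising from $e^{-2\pi i bn\cdot ar}$ after re-indexing survives intact. There are no analytic subtleties, since the hypothesis $v\in\SO$ guarantees that $\{r_{vv}[k,l]\}\in\lo(\Zt)$, so the sums in both \eqref{eq:V-hat} and \eqref{eq:Phi-def} converge absolutely and the interchange of sum and integral implicit in Poisson summation is fully justified.
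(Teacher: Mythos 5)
The paper offers no proof of this proposition---it is imported verbatim from \cite{WES05}---so there is no internal argument to compare against; your job was effectively to reconstruct the proof of the identity $\bV=\bPhi$, and the route you choose (re-index via $m=qk+l$, recognize the inner sum as a $\tfrac1b$-periodization, convert it by Poisson summation into Fourier samples at $bn$, and identify those samples with values of $r_{vv}$) is the standard and correct one. Your reduction of the Riesz-bound equivalence to Proposition~\ref{prop:RieszV} once the identity is in hand is also right, and your appeal to $v\in\SO$ to justify Poisson summation and all interchanges is exactly the right hypothesis to invoke.

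The one place where you are too quick is the final claim that the phases ``drop out in exactly the combination that defines $\bPhi_{r,s}(\om,x)$.'' Carrying out your own plan carefully: $\hat h_{r,s,k}(bn)=e^{-2\pi i bn(x-ar)}\,r_{vv}[s-r-pk,-n]$, the factor $e^{2\pi i abpkn}=e^{2\pi i qkn}=1$ disappears, and after the relabelings $k\mapsto-k$, $n\mapsto-l$ one lands on
\begin{equation*}
\bV_{r,s}(\om,x)=\sum_{k,l\in\Z} r_{vv}[s-r+pk,l]\,e^{-2\pi i ablr}\,e^{+2\pi i (blx+ak\om)},
\end{equation*}
i.e.\ $\bPhi_{r,s}(-\om,-x)$ rather than $\bPhi_{r,s}(\om,x)$ as written in (\ref{eq:Phi-def}). (A sanity check with $p=1$, $a=b=1$ confirms the reflection: there $\Phi(\om,x)=\bV(-\om,-x)$.) This is almost certainly a sign-convention discrepancy in the paper's statement rather than a flaw in your method, and it is harmless for the conclusion: $(\om,x)\mapsto(-\om,-x)$ is a measure-preserving bijection of $\R^2$, so the bounds (\ref{eq:Phi-bounds}) hold for $\bPhi(\om,x)$ if and only if they hold for $\bPhi(-\om,-x)$. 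But a complete write-up should either track the signs to the end and state the identity in the form it actually takes, or note explicitly that the residual reflection does not affect the Riesz characterization, rather than asserting that the exponentials match.
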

In the integer under-sampling case $p=1$, $\bPhi(\om,x)$ of (\ref{eq:Phi-def}) reduces to the scalar function
\begin{equation}\label{eq:Phi-boundsPeq1}
\Phi(\om,x) = \sum_{k,l\in\Z} r_{vv}[k,l] e^{-2\pi i (blx + ak\om)} = (\F r_{vv})(\om,x),
\end{equation}
where $\F r_{vv}$ is the 2D discrete-time Fourier transform (DTFT) of $r_{vv}[k,l]$. Therefore, in this case
condition (\ref{eq:Phi-bounds}) reduces to
\begin{equation}\label{eq:Phi-condition}
A \leq (\F r_{vv})(\om,x) \leq B \quad \text{for almost all } (\om,x)\in\R^2
\end{equation}
for some $A>0$ and $B<\infty$.

The $\bPhi$-characterization is of particular interest in our context as it can be used to investigate any twisted
convolution operation with a sequence $h\in\lo(\Zt)$. Indeed, it was shown in \cite{WMES07} that such an operation is
invertible if and only if the matrix-valued function
\begin{equation}\label{eq:phiHdef}
\bPhi^h_{r,s}(\om,x) = \sum_{k,l\in\Z} h_{s-r+pk,l} e^{-2\pi iabrl} e^{-2\pi i (blx + ak\om)}
\end{equation}
is invertible for all $\om$ and $x$. In fact, in some sense the function $\bPhi(\om,x)$ is to twisted convolution
what the DTFT is for convolution. Specifically, we show in Appendix~\ref{sec:appPhiV} that for two sequences
$c_{k,l}$ and $d_{k,l}$ having $\bPhi$-representations $\bPhi^d(\om,x)$ and $\bPhi^c(\om,x)$ respectively, the
matrix-valued function $\bPhi^{(\twc{c}{d})}(\om,x)$ associated to the twisted convolution $\twc{c}{d}$, can be
expressed as
\begin{equation}
\bPhi^{(\twc{c}{d})}(\om,x) = \bPhi^d(\om,x)\bPhi^c(\om,x).
\end{equation}

We conclude this section with the observation that having a Riesz basis for a Gabor space $\V$, it is possible to
construct many others using equivalent generating functions.
\begin{proposition}\label{prop:equivalent basis}
Let $\G(v,a,b)$ be a Riesz basis for its closed linear span $\V$ and $ab=q/p$ with $p$ and $q$ relatively prime. Let
\begin{equation}
w(t)=\sum_{k,l\in\Z} h_{k,l} M_{bl}T_{ak}v(t),
\end{equation}
where $\{h_{k,l}\}$ is a sequence of weights. Then $\G(w,a,b)$ is an equivalent Riesz basis for $\V$ if and only if
there exist constants $A>0$ and $B<\infty$ such that the $(p\times p)$-matrix-valued function $\bPhi^h(\om,x)$ of
(\ref{eq:phiHdef}) satisfies
\begin{equation}\label{eq:PhiPhi*}
A\bI_p \leq \bPhi^h(\om,x)\bPhi^h(\om,x)^{H} \leq B\bI_p \quad \mbox{for almost all } (\om,x)\in \Rt,
\end{equation}
where $\bPhi^h(\om,x)^{H}$ denotes the conjugate transpose of $\bPhi^h(\om,x)$.
\end{proposition}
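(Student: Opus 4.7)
The plan is to factor the synthesis operator $S_w$ of the candidate system $\G(w,a,b)$ through the synthesis operator $S_v$ of the given Riesz basis and a twisted-convolution operator with kernel $h=\{h_{k,l}\}$, and then to read off the Riesz-basis condition in the $\bPhi$-domain using the product rule $\bPhi^{(\twc{c}{h})} = \bPhi^h\bPhi^c$ stated at the end of Section~\ref{sec:GaborRiesz}.

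First I would substitute the expansion $w=\sum_{m,n}h_{m,n}M_{bn}T_{am}v$ into $M_{bl}T_{ak}w$ and push $T_{ak}$ past $M_{bn}$ using the commutation law $T_{ak}M_{bn}=e^{-2\pi i abkn}M_{bn}T_{ak}$. Re-indexing with $K=k+m$, $L=l+n$, a direct calculation gives
\begin{equation*}
S_w c \;=\; S_v\bigl(\twc{c}{h}\bigr) \qquad \text{for every } c\in\lt(\Zt).
\end{equation*}
Combined with the Riesz bounds $A_v\norm{d}_{\lt}^2\leq\norm{S_v d}^2\leq B_v\norm{d}_{\lt}^2$ for $\G(v,a,b)$ and the fact that $\text{range}(S_v)=\V$, this identity shows that $\G(w,a,b)$ is a Riesz basis for $\V$ if and only if the twisted-convolution operator $C_h:c\mapsto\twc{c}{h}$ is a bounded, boundedly invertible operator on $\lt(\Zt)$. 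The Riesz bounds of $\G(w,a,b)$ are then sandwiched between the squared operator norms of $C_h$ and $C_h^{-1}$ (scaled by $A_v$ and $B_v$), while the requirement that the closed span of $\G(w,a,b)$ exhaust all of $\V$, rather than a proper subspace, is exactly the surjectivity of $C_h$.

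The final step is to characterize invertibility of $C_h$ via $\bPhi^h$. The product rule shows that on the $\bPhi$-side, $C_h$ acts by pointwise left multiplication by the $p\times p$ matrix-valued function $\bPhi^h(\om,x)$. The main technical obstacle is the Plancherel-type statement that the correspondence $c\mapsto\bPhi^c$ is, up to a fixed constant, a Hilbert-space isometry from $\lt(\Zt)$ onto the space of square-integrable $p\times p$ matrix-valued functions on $[0,1/a)\times[0,1/b)$; this can be derived from the explicit formula~(\ref{eq:phiHdef}) by splitting the sum over the first index into the $p$ residue classes modulo $p$ and invoking the Parseval identity on the torus in each class. Once this isometry is in hand, the operator-norm bounds of $C_h$ on $\lt(\Zt)$ translate pointwise almost everywhere into the two-sided matrix inequality $A\bI_p\leq\bPhi^h(\om,x)\bPhi^h(\om,x)^{H}\leq B\bI_p$, which is precisely (\ref{eq:PhiPhi*}); invertibility of $C_h$ corresponds to pointwise invertibility of $\bPhi^h(\om,x)$ with essentially bounded inverse, which is the same condition.
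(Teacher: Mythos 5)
Your factorization $S_w c = S_v\bigl(\twc{c}{h}\bigr)$ is correct, and reducing the claim to the bounded invertibility of the twisted-convolution operator $C_h : c\mapsto \twc{c}{h}$ on $\lt(\Zt)$ is a sound and arguably cleaner organization than the paper's: it handles the Riesz bounds and the requirement that $\G(w,a,b)$ span all of $\V$ in one stroke (the latter becoming surjectivity of $C_h$), whereas the paper treats these two points separately. The paper instead works with Gramians: it computes $r_{ww}=\twc{h^{\ast}}{\twc{r_{vv}}{h}}$ with $h^{\ast}[k,l]=\overline{h_{-k,-l}}$, deduces $\bPhi^{ww}=\bPhi^{h}\bPhi^{vv}\bPhi^{h,H}$ from the multiplication property of Appendix A, applies the already-established criterion $C\bI_p\le\bPhi^{ww}\le D\bI_p$ of Proposition \ref{prop:RieszPhi}, and sandwiches using the bounds on $\bPhi^{vv}$; the equality of spans is then checked by writing $v=\sum g_{m,n}M_{bn}T_{am}w$ with $g$ the $\nat$-inverse of $h$.

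The one place your argument is not right as stated is the ``Plancherel'' step. The map $c\mapsto\bPhi^{c}$ is, up to a fixed constant, an isometry of $\lt(\Zt)$ \emph{onto its range}, but that range is a proper subspace of the matrix-valued $L^2$ space: all $p^2$ entries of $\bPhi^{c}$ are built from the single sequence $c$ (row $r$ differs from row $0$ only by an index shift and the unimodular factors $e^{-2\pi i abrl}$), so the image is far from all square-integrable matrix-valued functions. Consequently, the passage from operator-norm bounds for $C_h$ --- which corresponds to multiplication by $\bPhi^h$ restricted to that subspace --- to the pointwise a.e.\ inequality $A\bI_p\le\bPhi^h(\om,x)\bPhi^h(\om,x)^{H}\le B\bI_p$ is not automatic: one must show the subspace is rich enough at each $(\om,x)$ to detect the full matrix norm. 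This is precisely the nontrivial content the paper imports from Proposition \ref{prop:RieszPhi} and from the cited fact that twisted convolution with $h\in\lo(\Zt)$ is bounded and invertible if and only if the function $\bPhi^h$ of (\ref{eq:phiHdef}) is uniformly bounded and boundedly invertible almost everywhere. If you invoke that equivalence for $C_h$ directly, rather than re-deriving it through a claimed surjective Plancherel map, your proof closes.
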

\begin{proof}
See Appendix~\ref{sec:appProofEquivBasis}.
\end{proof}
In the case of integer under-sampling (\ie when $p=1$), $\bPhi^h(\om,x)$ becomes a scalar function, which is simply
the 2D DTFT of $h_{k,l}$. In this setting, condition (\ref{eq:PhiPhi*}) becomes
\begin{equation}
A\leq \abs{\Phi^h(\om,x)}^2 \leq B \quad \mbox{for almost all } (\om,x)\in \Rt.
\end{equation}

\section{Sampling and Reconstruction in Shift-Invariant
Spaces}\label{sec:SI}

To address the recovery of a function $f(t)$ from its non-invertible Gabor transform, we will harness several
strategies which were initially developed in the context of sampling theory. Specifically, the last two decades have
witnessed a substantial amount of research devoted to the problem of recovering a signal $f(t)$ from the equidistant
point-wise samples of its filtered version, using a predefined reconstruction filter \cite{U00,EM09,TE10}. As can be
seen in Fig.~\ref{fig:SISampRec}, the sampling stage in this setting, corresponds to the central branch in the
analysis filter-bank of the Gabor transform shown in Fig.~\ref{fig:GaborSampRec}(a). Thus, the time-frequency plane
is sampled in this scenario only on the lattice $\{(ak,0)\}_{k\in\Z}$. Similarly, the reconstruction process of
Fig.~\ref{fig:SISampRec} can be identified with the central branch of the synthesis filter-bank of
Fig.~\ref{fig:GaborSampRec}(b).

\begin{figure*}\centering
\begin{tabular}{cc}
\subfloat[Sampling]{\includegraphics[scale=0.75, trim=0cm -1.51cm 0cm 0cm ]{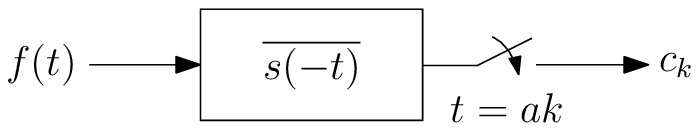}}&
\subfloat[Reconstruction]{\includegraphics[scale=0.75, trim=0cm 0cm 0cm 0cm ]{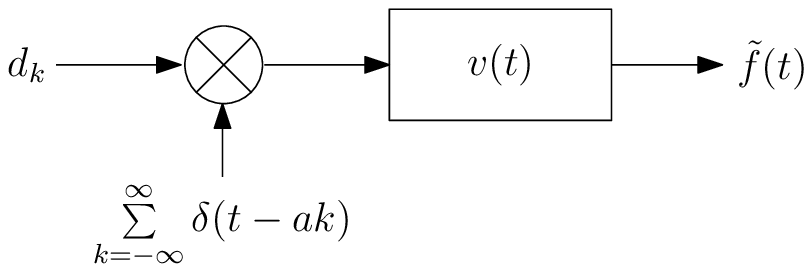}}
\end{tabular}
\caption{Sampling (a) and reconstruction (b) with given filters.} \label{fig:SISampRec}
\end{figure*}

The main goal in this setting is to produce a set of expansion coefficients $\{d_k\}$ by processing the samples
$\{c_k\}$, such that the recovered signal $\tilde{f}(t)$ possesses certain desired properties. In this section we
briefly review three methods for tackling this problem, each based on a different design criterion. For more detailed
explanations and a review of other methods, we refer the reader to \cite{UA94,EW05,ED04,EM09,TE10}. In the following
sections, we will extend these results to the Gabor scenario.

For simplicity, we assume here that $a=1$. The reconstruction process of Fig.~\ref{fig:SISampRec} can be written in
operator notation as $\tilde{f}=Vd$, where $V:\lt\rightarrow\Lt(\R)$ is the synthesis operator associated with the
functions $\{v(t-k)\}_{k\in\Z}$, defined as
\begin{equation}
Vd = \sum_{k\in\Z}d_{k}v(t-k) = \sum_{k\in\Z}d_{k}T_k v(t) \quad \mbox{for every } d\in\lt(\Z).
\end{equation}
Similarly, since $c_k=\inner{f(t)}{s(t-k)}$, the sequence of samples $\{c_k\}$ are obtained by applying the synthesis
operator $S^*$, which is the conjugate of the analysis operator $S$ associated with the functions
$\{s(t-k)\}_{k\in\Z}$:
\begin{equation}
S^{\ast}f = \{\inner{f(t)}{s(t-k)}\} = \{\inner{f}{T_k s}\} \quad \mbox{for every } f\in\Lt(\R).
\end{equation}
We will refer to $\Sp=\overline{\rm span}\{v(t-k)\}$ and $\V=\overline{\rm span}\{v(t-k)\}$ as the sampling and
reconstruction spaces respectively. Spaces of this type are called shift-invariant (SI).

As in the Gabor transform, we will focus on cases where the sets of functions $\{s(t-n)\}$ and $\{v(t-n)\}$
constitute Riesz bases for their span. Then, both the sampling and reconstruction are stable procedures. It is well
known \cite{A96} that the functions $\{v(t-n)\}$ form a Riesz basis for their span $\V$ if and only if there exist
constants $A>0$ and $B<\infty$ such that
\begin{equation}
A \leq \phi_{VV}(\om) \leq B \quad \text{for almost all }\om\in\R,
\end{equation}
where
\begin{equation}\label{eq:phi}
\phi_{VV}(\om) = \frac{1}{2\pi}\sum_{k\in\Z} \left|\hat{v}(\om-k)\right|^2
\end{equation}
is the DTFT of the cross-correlation sequence
\begin{equation}\label{eq:cross-corelation-v}
r_{vv}[n] = \inner{v(t)}{v(t-n)} = \left(\conv{v(t)}{\overline{v(-t)}}\right)(n),
\end{equation}
and $\hat{v}(\om)$ is the Fourier transform of $v(t)$. In other words, $\{v(t-n)\}$ is a Riesz basis if and only if
the sequence $r_{vv}[n]$ is bounded and invertible in the convolution algebra $\lo(\Z,\ast)$. In particular, the
functions $\{v(t-n)\}$ form an orthonormal basis if and only if $A=B=1$. Notice the analogy with condition
(\ref{eq:Phi-bounds}) (and (\ref{eq:Phi-condition}) in the case $p=1$), which was developed for Gabor systems.

\subsection{Consistent reconstruction}
\label{sec:consSI}

Perhaps the most intuitive demand from the recovered signal $\tilde{f}(t)$ is that it would produce the same sequence
of samples $\{c_k\}$ were it re-injected to the sampling device of figure \ref{fig:SISampRec}(a), namely
\begin{equation}
\inner{\tilde{f}(t)}{s(t-k)} = c_k = \inner{f(t)}{s(t-k)}
\end{equation}
for all $k\in\Z$. This \emph{consistency} requirement was first introduced in \cite{UA94} in the context of sampling
in SI spaces and then generalized to arbitrary spaces in \cite{E02,EW05}. There, it was shown that consistent
reconstruction is possible under the direct-sum condition $\Sp^{\perp} \oplus \V = \Lt(\R)$, where $\oplus$ denotes a
sum of two subspaces that intersect only at the zero vector. This means that $\Sp^{\perp}$ and $\V$ are disjoint and
together span the space $\Lt(\R)$.

In the SI setting, the direct-sum condition translates into the simple requirement that \cite{CE04}
\begin{equation}\label{eq:phi condition}
\abs{\phi_{SV}(\om)} > A, \quad \text{for almost all }\om\in\R
\end{equation}
for some positive constant $A$, where
\begin{equation}\label{eq:CrossCorSI}
\phi_{SV}(\om) = \frac{1}{2\pi}\sum_{k\in\Z} \overline{\hat{s}(\om-k)} \hat{v}(\om-k)
\end{equation}
is the DTFT of the cross-correlation sequence $r_{sv}[n] = \inner{s(t)}{v(t-n)}$. Under this condition,
reconstruction can be obtained by convolving the sample sequence $\{c_k\}$ with the filter $h_{\text{con}}$, whose
DTFT is given by \cite{UA94,E03,E04}
\begin{equation}
H_{\text{con}}(\om) = \frac{1}{\phi_{SV}(\om)},
\end{equation}
to obtain the sequence of expansion coefficients $\{d_k\}$.

If $\Sp$ and $\V$ are two arbitrary subspaces of $\Lt(\R)$ satisfying $\Sp^{\perp} \oplus \V = \Lt(\R)$ (namely not
necessarily SI spaces), spanned by the functions $\{s_n(t)\}$ and $\{v_n(t)\}$ respectively, then the sequence of
expansion coefficients $d$ can be obtained by applying the the operator
\begin{equation}
H_{\text{con}}=(S^*V)^{-1}
\end{equation}
on the sequence of samples $c$, where $S$ and $V$ are the synthesis operators associated with $\{s_n(t)\}$ and
$\{v_n(t)\}$ respectively. The direct-sum requirement guarantees that $S^*V:\lt\rightarrow\lt$ is continuously
invertible. In the next sections, we will use this latter characterization to develop a consistent reconstruction
procedure for non-invertible Gabor transforms.

\subsection{Minimax regret reconstruction}
\label{sec:minimaxSI}

A drawback of the consistency approach is that the fact that $f(t)$ and $\tilde{f}(t)$ yield the same samples does
not necessarily imply that $\tilde{f}(t)$ is close to $f(t)$. Indeed, for a signal $f(t)$ not in $\V$, the norm of
the resulting reconstruction error $\tilde{f}(t)-f(t)$ can be arbitrarily large, if $\Sp$ is nearly orthogonal to
$\V$.

To directly control the reconstruction error, it is important to notice that $\tilde{f}(t)$ is restricted to lie in
$\V$ by construction. Therefore, the best possible recovery is the orthogonal projection of $f(t)$ onto $\V$, namely
$\tilde{f}=P_{\V}f$, a fact that follows from the projection theorem \cite{B99}. This solution cannot be generated in
general, because we do not know $f(t)$ but rather only the sequence of samples $\{c_k\}$ it produced. The difference
between the squared-norm error of any recovery $\tilde{f}(t)$ and the smallest possible error, which is
$\|f-P_{\V}f\|^2=\|P_{\V^\perp} f\|^2$, is called the {\em regret} \cite{ENB03}. The regret depends in general on
$f(t)$ and therefore generally cannot be minimized uniformly for all $f(t)$. Instead, the authors in \cite{ED04}
proposed minimizing the worst-case regret over all bounded-norm signals $f(t)$ that are consistent with the given
samples, which results in the problem
\begin{equation}\label{eq:regret2}
\min_{\tilde{f} \in \V} \max_{f\in\B} \|\tilde{f}-f\|^2 -\|P_{\V^\perp} f\|^2,
\end{equation}
where $\B=\{f:S^*f=c,\|f\|\leq L\}$ is the set of feasible signals.

It was shown in \cite{ED04} that the minimax-regret reconstruction can be obtained by filtering the samples $c_k$
with the filter $h_{\text{mx}}$ whose DTFT is given by
\begin{equation}
H_{\text{mx}}(\om) = \frac{\phi_{VS}(\om)}{\phi_{SS}(\om) \phi_{VV}(\om)},
\end{equation}
where $\phi_{VS}(\om)$, $\phi_{SS}(\om)$ and $\phi_{VV}(\om)$ are as in (\ref{eq:CrossCorSI}) with the corresponding
substitution of the generators $v(t)$ and $s(t)$.  Note that the solution is independent of the bound $L$ appearing
in the definition of $\B$.

If the sampling and reconstruction functions form Riesz bases for arbitrary spaces $\Sp$ and $\V$ (not necessarily
SI), then the sequence of expansion coefficients $d$ can be obtained by applying the operator
\begin{equation}\label{eq:generalMinimax}
H_{\text{mx}}=(V^*V)^{-1}S^*V(S^*S)^{-1}
\end{equation}
on the sequence of samples $c$. The operators $V^*V$ and $S^*S$ are guaranteed to be continuously invertible due to
the Riesz basis assumption. This more general characterization will be used in the next sections to develop a
minimax-regret recovery method for non-invertible Gabor transforms.

\subsection{Subspace-prior reconstruction}
\label{sec:subspaceSI}

The consistent reconstruction approach leads to perfect recovery for input signals that lie in the reconstruction
space $\V$ \cite{UA94}. The minimax-regret method, on the other hand, leads to the best possible approximation
$\tilde{f}=P_{\V}f$ for signals $f(t)$ lying in the sampling space $\Sp$ \cite{ED04}. Therefore, the two methods can
be thought of as emerging from the prior that $f(t)$ lies in a certain subspace $\W$ of $\Lt(\R)$, where $\W=\V$ in
the consistent strategy and $\W=\Sp$ in the minimax-regret approach. In practice, though, it is often desirable to
choose the sampling and reconstruction spaces according to implementation constraints and not to reflect our prior
knowledge on the typical signals entering our sampling device. Thus, commonly neither constitutes a subspace prior
$\W$, which is good in the sense that $\|f-P_{\W}f\|$ is small for most signals in our application.

A generalization of these two methods results by assuming that $f\in\W$ where $\W=\overline{\rm span}\{w(t-k)\}$ for
a generator $w(t)$, which may be different than $s(t)$ and $v(t)$. If the subspace $\W$ satisfies the direct-sum
condition $\Sp^{\perp} \oplus \W = \Lt(\R)$, then the solution $\tilde{f} = P_{\V}f$ can be generated  by filtering
the sample sequence $c_k$ with \cite{ED04}
\begin{equation}
H_{\text{sub}}(\om) = \frac{\phi_{VW}(\om)}{\phi_{SW}(\om) \phi_{VV}(\om)},
\end{equation}
where $\phi_{VW}(\om)$, $\phi_{SW}(\om)$ and $\phi_{VV}(\om)$ are as in (\ref{eq:CrossCorSI}) with the appropriate
substitution of $v(t)$, $s(t)$, and $w(t)$.

For arbitrary sampling, reconstruction and prior subspaces $\Sp$, $\V$ and $\W$ (\ie not necessarily SI), the
coefficient sequence $d$ can be obtained by applying the transformation
\begin{equation}
H_{\text{sub}} = (V^*V)^{-1}V^*W(S^*W)^{-1}
\end{equation}
on the sample sequence $c$, where $W$ is the synthesis operator associated to the prior functions $\{w_n(t)\}$. This
general formulation will be used in the next sections to derive a subspace-prior recovery technique for
non-invertible Gabor transforms.


\section{Integer under-sampling}\label{sec:GaborInt}

In this section we address the problem of recovering a signal $f(t)$ from its non-invertible Gabor transform
coefficients $\{c_{k,l}\}$, given by (\ref{eq:analysis}), using a pre-specified synthesis window $v(t)$. We focus on
prior-free approaches that do not take into account any knowledge on the signal $f(t)$. Specifically, here we employ
the consistency and minimax-regret methods discussed in the previous section to the Gabor scenario. To emphasize the
commonalities with respect to the SI sampling case, and to retain simplicity, we begin the discussion with the case
of integer under-sampling ($p=1$). In the next section we generalize the results to arbitrary $p$.

\subsection{Consistent synthesis}
\label{sec:consGaborInteger}

In the Gabor transform, the sampling (analysis) space $\Sp$ is spanned by the Gabor system $\G(s,a,b)$ and the
reconstruction (synthesis) space $\V$ is the span of $\G(v,a,b)$. As discussed in Section \ref{sec:consSI},
consistent reconstruction is possible if $\Sp^{\perp} \oplus \A = \Lt(\R)$. In the case of SI spaces, this direct-sum
condition translates to the requirement that the cross-correlation sequence $\{\inner{s(t)}{v(t-n)}\}_{n\in\Z}$ has
an inverse in the convolution algebra $\lo(\Z^2,\ast)$. A similar condition is true in the setting of Gabor spaces.

The next proposition characterizes the class of pairs of analysis and synthesis windows satisfying the direct-sum
requirement in the integer under-sampling regime.
\begin{proposition}\label{prop:direct sum p=1}
Assume that $\G(s,a,b)$ and $\G(v,a,b)$ are Riesz sequences that span the spaces $\Sp$ and $\V$ respectively, and
$ab=q\in\mathbb{N}$. Then $\Sp^{\perp} \oplus \V = \Lt(\R)$ if and only if the function $\Phi^{sv}(\om,x)$, defined
as
\begin{equation}\label{eq:phi_sv}
\Phi^{sv}(\om,x) = \sum_{k,l\in\Z} r_{sv}[k,l] e^{-2\pi i (blx + ak\om)} = (\F r_{sv})(\om,x),
\end{equation}
is nonzero for all $(\om,x)\in [0,1/a)\times [0,1/b)$. Here,
\begin{equation}
r_{sv}[k,l] = \inner{v}{M_{bn} T_{am} s}\label{eq:r_sv}
\end{equation}
is the Gabor transform of the synthesis window $v(t)$.
\end{proposition}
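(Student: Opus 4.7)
My plan is to reduce the direct-sum condition to a statement about the invertibility of a specific operator on $\lt(\Zt)$, and then identify that operator as a two-dimensional convolution in the integer under-sampling regime.

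First I would recall the standard functional-analytic fact (which is used throughout Section~\ref{sec:SI}) that if $\{s_n\}$ and $\{v_n\}$ are Riesz bases for $\Sp$ and $\V$ respectively with synthesis operators $S$ and $V$, then $\Sp^{\perp} \oplus \V = \Lt(\R)$ if and only if $S^*V:\lt\to\lt$ is a continuously invertible operator. This is exactly the characterization that underlies the consistent reconstruction operator $(S^*V)^{-1}$ of Section~\ref{sec:consSI}, so I would invoke it as the starting point. In our Gabor setting, the Riesz basis hypothesis is built into the assumption on $\G(s,a,b)$ and $\G(v,a,b)$.

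Next I would compute the matrix elements of $S^*V$ explicitly. For a sequence $c \in \lt(\Zt)$,
\begin{equation}
(S^*Vc)_{k,l} = \sum_{m,n\in\Z} c_{m,n} \inner{M_{bn}T_{am}v}{M_{bl}T_{ak}s}.
\end{equation}
Using unitarity and the commutation relation $M_{bl}T_{ak} = e^{2\pi i abkl}T_{ak}M_{bl}$ to move the factors past one another, I expect to obtain
\begin{equation}
\inner{M_{bn}T_{am}v}{M_{bl}T_{ak}s} = e^{2\pi i ab m(l-n)} r_{sv}[k-m,l-n],
\end{equation}
which coincides in structure with the twisted convolution (\ref{eq:twc}). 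In the integer under-sampling regime $ab=q\in\mathbb{N}$, the exponential factor is identically one, so that $S^*V$ reduces to \emph{standard} 2D convolution with the kernel $r_{sv}$, exactly as remarked after (\ref{eq:twc}).

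At this stage the problem is reduced to a classical one: the convolution operator $c\mapsto r_{sv}\ast c$ on $\lt(\Zt)$ is continuously invertible if and only if its symbol, the DTFT of $r_{sv}$, is bounded and bounded away from zero. The symbol is precisely $\Phi^{sv}(\om,x)$ of (\ref{eq:phi_sv}), and boundedness from above is free since $s,v\in\SO$ implies $r_{sv}\in\lo(\Zt)$ (this was noted after the definition of $\SO$), which in turn makes $\Phi^{sv}$ a continuous $(1/a,1/b)$-periodic function. By compactness of the fundamental domain and continuity, being bounded away from zero is equivalent to being nonzero on $[0,1/a)\times[0,1/b)$. Combining these three steps proves the equivalence.

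The main obstacle I anticipate is bookkeeping in the second step: correctly tracking the phase factors produced by the commutation relation and the adjoints of the time-frequency shifts, and verifying that the surviving exponent reduces to $e^{2\pi iabm(l-n)}$ so that the collapse to ordinary convolution occurs when $p=1$. The other pieces --- the direct-sum/invertibility equivalence and the convolution/DTFT invertibility criterion --- are standard and only need to be invoked, together with the $\SO$-regularity ensuring summability and continuity of $\Phi^{sv}$.
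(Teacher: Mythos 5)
Your proposal is correct and follows essentially the same route as the paper's proof: reduce the direct-sum condition to continuous invertibility of $S^*V$ (the paper cites the general Hilbert-space result of \cite{EW05}), identify $S^*V$ as standard 2D convolution with $r_{sv}$ when $ab\in\mathbb{N}$, and invoke the criterion that a convolution operator is invertible iff its DTFT is nonvanishing (the paper phrases this as invertibility of $r_{sv}$ in $\lo(\Zt,\ast)$, with your continuity-plus-compactness remark supplying the justification the paper leaves implicit). The only quibble is the sign of your anticipated phase factor --- the correct exponent is $e^{-2\pi i abm(l-n)}$, as in the paper's rational-case computation --- but this is immaterial here since the factor is identically $1$ for integer $ab$.
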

\begin{proof}
It was shown in \cite{EW05}, for general Hilbert spaces, that if $\Sp$ and $\V$ are spanned by Riesz bases
$\G(s,a,b)$ and $\G(v,a,b)$ respectively, then $\Sp^{\perp} \oplus \V = \Lt(\R)$ if and only if the operator
$S^{\ast} V$ is continuously invertible on $\lt$. Here, $S^*$ and $V$ are the analysis and synthesis operators
associated with $\G(s,a,b)$ and $\G(v,a,b)$, respectively. By definition, for any sequence $c\in\lt(\Zt)$
\begin{align}
(S^{\ast} Vc)[k,l] &= \inner{\sum_{m,n\in\Z} c_{m,n}M_{bn} T_{am} v}{M_{bl} T_{ak} s} \nonumber\\
&= \sum_{m,n\in\Z} c_{m,n} \inner{v}{M_{bl-bn} T_{ak-am} s} \nonumber\\
&= \sum_{m,n\in\Z} c_{k-m,l-n} \inner{v}{M_{bn} T_{am} s} \nonumber\\
&= (\conv{r_{sv}}{c})[k,l].
\end{align}
Hence, the operator $S^{\ast} V$ is simply a 2D convolution operator with kernel $r_{sv}[k,l] = \inner{v}{M_{bn}
T_{am} s}$ and $S^{\ast} V$ is invertible if and only if $r_{sv}[k,l]$ is invertible in the convolution algebra
$\lo(\Zt,\ast)$. As shown in Section \ref{sec:GaborRiesz}, this sequence has a representation $\Phi^{sv}(\om,x)$,
defined by (\ref{eq:phiHdef}), which is its 2D DTFT in the case $p=1$. A sequence is invertible with respect to
convolution if and only if its DTFT has no zeros. Therefore, $r_{sv}[k,l]$ is invertible if and only if
$\Phi^{sv}(\om,x)\neq 0$ implying that $\Sp^{\perp} \oplus \V = \Lt(\R)$ if and only if $\Phi^{sv}(\om,x)\neq 0$.
\end{proof}

Assuming that indeed $\Sp^{\perp} \oplus \A = \Lt(\R)$, we know from Section \ref{sec:consSI} that to obtain a
consistent recovery, we must apply the operator $H_{\text{con}}=(S^*V)^{-1}$ on the coefficients $\{c_{k,l}\}$ prior
to synthesis. In the proof of Proposition~\ref{prop:direct sum p=1}, we showed that $S^*V$ is a 2D convolution
operator with the kernel $r_{sv}[k,l]$ of (\ref{eq:r_sv}). Therefore, $(S^*V)^{-1}$ corresponds to filtering the
Gabor coefficients with the filter $h_{\text{con}}$ whose 2D DTFT is given by
\begin{equation}\label{eq:HconGabor}
H_{\text{con}}(\om,x) = \frac{1}{\Phi^{sv}(\om,x)}.
\end{equation}
This filter is well defined by Proposition \ref{prop:direct sum p=1} since we assumed that the spaces generated by
$s(t)$ and $v(t)$ satisfy the direct sum condition.

Observe that during the operations of analysis and pre-processing of the Gabor coefficients $c_{k,l}$, we in fact
compute a dual Riesz basis for the reconstruction space $\V$. In case the synthesis and analysis spaces are the same,
namely $\Sp = \V$, we compute the orthogonal dual basis. However, when the spaces are different we compute a general
(oblique) dual Riesz basis for $\V$.
\begin{proposition}
Let $\G(s,a,b)$ and $\G(v,a,b)$ be Riesz sequences that span the spaces $\Sp$ and $\V$ respectively, where $ab$ is an
integer, and assume that $\Sp^{\perp} \oplus \V = \Lt(\R)$. Then a dual Riesz basis for the space $\V$ is $\G(g,a,b)$
with
\begin{equation}
g(t) = \sum_{m,n \in \Z} \overline{h_{\text{con}}[m,n]}T_{-am}M_{-bn}s(t) \, \in \Sp.\label{eq:dual_g}
\end{equation}
where $h_{\text{con}}[m,n]$ is the inverse of $r_{sv}[k,l]$ with respect to $\ast$.
\end{proposition}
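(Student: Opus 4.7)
My plan is to show two things: (i) $\G(g,a,b)$ is a Riesz basis for $\Sp$, and (ii) for every $f\in\V$ the reconstruction formula $f=\sum_{k,l}\inner{f}{M_{bl}T_{ak}g}\,M_{bl}T_{ak}v$ holds, which is precisely the defining property of an oblique dual Riesz basis for $\V$.

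First I would verify that the sum defining $g(t)$ converges in $\Lt(\R)$ and yields an element of $\Sp$. Since $s,v\in\SO$, the cross-correlation $r_{sv}[k,l]$ lies in $\lo(\Zt)$. By Proposition~\ref{prop:direct sum p=1}, the direct-sum condition forces $\Phi^{sv}(\om,x)$ to be bounded away from zero, and since $\Phi^{sv}=\F r_{sv}$ with $r_{sv}\in\lo(\Zt)$, a Wiener-type argument guarantees that its inverse $h_{\text{con}}$ is also in $\lo(\Zt)$. Hence $g$ is a norm-convergent combination of the Riesz basis elements $\G(s,a,b)$ and lies in $\Sp$.

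Next I would establish the Riesz basis property via Proposition~\ref{prop:equivalent basis}. Rewriting $g=\sum_{k',l'}\overline{h_{\text{con}}[-k',-l']}\,M_{bl'}T_{ak'}s$, the weight sequence is $h_{k,l}=\overline{h_{\text{con}}[-k,-l]}$. For $p=1$ its $\bPhi^h$ reduces to the scalar DTFT, which one computes directly to be $\overline{H_{\text{con}}(\om,x)}=1/\overline{\Phi^{sv}(\om,x)}$. Since $\Phi^{sv}$ is bounded and bounded away from zero, $|\Phi^h|^2$ satisfies the two-sided bound of Proposition~\ref{prop:equivalent basis}, so $\G(g,a,b)$ is a Riesz basis for $\Sp$.

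The heart of the proof is the dual reconstruction identity. Using the commutation $M_{bl}T_{ak}=T_{ak}M_{bl}$ available for $p=1$, I would compute
\begin{equation}
M_{bl}T_{ak}g=\sum_{m,n\in\Z}\overline{h_{\text{con}}[m,n]}\,M_{b(l-n)}T_{a(k-m)}s,
\end{equation}
and hence
\begin{equation}
\inner{f}{M_{bl}T_{ak}g}=\sum_{m,n}h_{\text{con}}[m,n]\,c_{k-m,l-n}=(\conv{h_{\text{con}}}{c})[k,l]=d_{k,l}.
\end{equation}
For $f\in\V$ the consistent-synthesis construction in Section~\ref{sec:consGaborInteger} yields $\tilde f=f$, so $f=\sum_{k,l}d_{k,l}M_{bl}T_{ak}v=\sum_{k,l}\inner{f}{M_{bl}T_{ak}g}\,M_{bl}T_{ak}v$. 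This is exactly the reproducing identity of an oblique dual Riesz basis, finishing the proof.

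The only step I expect to require real care is the convergence/$\lo$ claim for $h_{\text{con}}$ used in step one; everything else is a clean calculation enabled by the fact that $p=1$ makes $T$ and $M$ commute and turns twisted convolution into ordinary convolution.
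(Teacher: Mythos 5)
Your proof is correct and follows essentially the same route as the paper's: the key computation $\inner{f}{M_{bl}T_{ak}g}=(\conv{h_{\text{con}}}{c})[k,l]$ via the $p=1$ commutation of $T$ and $M$, the appeal to Proposition~\ref{prop:equivalent basis} for the Riesz property, and the reproducing formula on $\V$ are exactly the paper's argument. The only differences are cosmetic refinements: you justify $h_{\text{con}}\in\lo(\Zt)$ via a Wiener-type lemma (a detail the paper omits) and deduce duality from the reproducing identity plus uniqueness of Riesz coefficients rather than checking the biorthogonality relation $\inner{M_{bl}T_{ak}v}{M_{bn}T_{am}g}=\de_{m-k}\de_{n-l}$ directly, which is equivalent.
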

\begin{proof}
Any signal in $\V$ can be recovered from the corrected coefficients
$d_{k,l} = (\conv{h_{\text{con}}}{c})[k,l]$ via $f(t) =
\sum_{k,l\in\Z} d_{k,l} M_{bl} T_{ak} v(t)$, where $c_{k,l}$ is as
in (\ref{eq:analysis}). Therefore, we may view this sequence as the
coefficients in a basis expansion. To obtain the corresponding basis
we note that by combining the effects of the analysis window $s(t)$
and the correction filter $H_{\text{con}}$ of (\ref{eq:HconGabor}),
the expansion coefficients can be equivalently expressed as $d_{k,l}
= \inner{f}{M_{bl}\,T_{ak}\,g}$ where
\begin{equation}
g(t) = \sum_{m,n \in \Z} \overline{h_{\text{con}}[m,n]}T_{-am}M_{-bn}s(t) \, \in \Sp.
\end{equation}
Indeed,
\begin{align}
\inner{f}{M_{bl}T_{ak},g}
&= \inner{f}{M_{bl}T_{ak} \left(\sum_{m,n \in \Z}\overline{h_{\text{con}}[m,n]}T_{-am}M_{-bn}s \right)} \nonumber\\
&= \inner{f}{\sum_{m,n \in \Z} \overline{h_{\text{con}}[m,n]}M_{bl}T_{ak}T_{-am}M_{-bn}s} \nonumber\\
&= \inner{f}{\sum_{m,n \in \Z} \overline{h_{\text{con}}[m,n]} M_{bl-bn}T_{ak-am}s}\nonumber\\
&= \sum_{m,n \in \Z} h_{\text{con}}[m,n]\inner{f}{M_{bl-bn}T_{ak-am}s}\nonumber\\
&= \sum_{m,n \in \Z} h_{\text{con}}[m,n]c_{k-m,l-n}\nonumber\\
&= (\conv{h_{\text{con}}}{c})[k,l] = d_{k,l}.
\end{align}
Therefore, any $f\in \V$ can be written as
\begin{equation}
f(t) = \sum_{k,l\in\Z} \inner{f}{M_{bl}T_{ak}g}M_{bl}T_{ak}v(t).
\end{equation}
It can be easily verified, by Proposition \ref{prop:equivalent basis}, that $\G(g,a,b)$ is an equivalent Riesz basis
for $\Sp$. Furthermore, it can be checked that
\begin{equation}
\inner{M_{bl}T_{ak}v}{M_{bn}T_{am}g} = \de_{m-k}\de_{n-l},
\end{equation}
implying that $\G(g,a,b)$ is a dual Riesz basis to $\G(v,a,b)$.
\end{proof}

\subsection{Minimax regret synthesis}
\label{sec:minimaxGaborInteger}

We now wish to develop a minimax-regret recovery method, similar to the SI sampling case of
Section~\ref{sec:minimaxSI}. Specifically, we would like to produce a recovery $\tilde{f}(t)$ for which the
worst-case regret $\|\tilde{f}-f\|^2 -\|P_{\V^\perp} f\|^2$ over all bounded-norm signals $f(t)$ consistent with the
given Gabor coefficients $\{c_{k,l}\}$, is minimal. As mentioned in Section~\ref{sec:minimaxSI}, the minimax-regret
reconstruction can be obtained by applying the operator $H_{\text{mx}}=(V^*V)^{-1}S^*V(S^*S)^{-1}$ on the Gabor
coefficients $c_{k,l}$ prior to synthesis.

From Section~\ref{sec:consGaborInteger} we know that when $p=1$, the operators $V^*V$, $S^*V$ and $S^*S$ correspond
to 2D convolutions with the kernels $r_{vv}[k,l]$, $r_{sv}[k,l]$ and $r_{ss}[k,l]$ respectively, which are given by
(\ref{eq:r_sv}) with the appropriate substitution of $s(t)$ and $v(t)$. Therefore, the minimax-regret recovery is
obtained by filtering the Gabor coefficients $c_{k,l}$ with the 2D filter $h_{\text{mx}}$, whose DTFT is given by
\begin{equation}\label{eq:HmxGabor}
H_{\text{mx}}(\om,x) = \frac{\Phi^{sv}(\om,x)}{\Phi^{ss}(\om,x)\Phi^{vv}(\om,x)}.
\end{equation}
Here, $\Phi^{sv}(\om,x)$, $\Phi^{ss}(\om,x)$, and $\Phi^{vv}(\om,x)$ are the 2D DTFTs of $r_{sv}[k,l]$, $r_{ss}[k,l]$
and $r_{vv}[k,l]$ respectively. This filter is well defined by Proposition~\ref{prop:RieszPhi} since we assumed that
$s(t)$ and $v(t)$ generate Riesz bases for their span.

\subsection{Efficient implementation}
As we have seen, the two reconstruction approaches discussed above are based on 2D filtering of the Gabor transform
$c_{k,l}$ prior to synthesis. A significant reduction in computation can be achieved in cases where the 2D correction
filter is a separable function of $k$ and $l$, namely when $h_{k,l}=u_k v_l$ for two sequences $u_k$ and $v_k$. In
these situations, one can first apply the 1D filter $u_k$ on each of the rows of $c_{k,l}$ (\ie along the time
direction), and then apply the 1D filter $v_l$ on each of the columns (along the frequency direction). If, for
example, $h_{k,l}$ is a separable finite-impulse-response (FIR) filter with $N\times N$ nonzero coefficients, then
direct application of it requires $N^2$ multiplications per output coefficient, whereas only $2N$ multiplications
suffice when implementing it using two 1D filtering operations.

Separable correction filters emerge when the cross-correlation sequences involved are separable functions of $k$ and
$l$. One such example is the case where $s(t)$ and $v(t)$ are Gaussian windows with variances $\sigma_s^2$ and
$\sigma_v^2$ respectively and $ab\sigma_s^2/(\sigma_s^2+\sigma_v^2)$ is an integer (recall that we also require that
$ab$ be an integer). Then $r_{sv}[k,l]$, $r_{ss}[k,l]$, and $r_{vv}[k,l]$ are all separable functions of $k$ and $l$,
so that both the consistent and the minimax-regret filters are separable. More details on non-invertible
Gaussian-window Gabor transforms are given in Section~\ref{sec:example}.


\section{Rational under-sampling}\label{sec:GaborRational}

We now generalize the results of the previous section to the case where the product $ab$ is not an integer, but
rather some rational number $q/p$ with $p$ and $q$ relatively prime. The main difficulty here is the fact that the
time-frequency shift operators do not commute when $p\neq 1$. Therefore, instead of standard convolution we will be
faced with a twisted convolution, which is a noncommutative operation. This makes the techniques from Fourier theory
inapplicable in a straightforward manner.

\subsection{Consistent synthesis}
\label{sec:consGaborRational}

Obtaining a reconstruction $\tilde{f}(t)$, which is consistent with the Gabor representation $c_{k,l}$ of $f(t)$, is
possible if $\Sp^\perp \oplus \V = \Lt(\R)$. As we have seen in Proposition \ref{prop:direct sum p=1}, in the integer
under-sampling case $p=1$ the direct sum condition translates to the requirement that the cross-correlation sequence
$r_{sv}[k,l]$ be invertible in the convolution algebra $\lo(\Z^2,\ast)$. In the setting of rational under-sampling,
we have the following.

\begin{proposition}\label{prop:direct sum}
Assume that $\G(s,a,b)$ and $\G(v,a,b)$ are Riesz sequences that span the spaces $\Sp$ and $\V$ respectively, and
$ab=q/p$ with $p$ and $q$ relatively prime. Then $\Sp^{\perp} \oplus \V = \Lt(\R)$ if and only if the $(p\times
p)$-matrix-valued function $\bPhi^{sv}(\om,x)$ with entries defined as
\begin{equation}
\bPhi^{sv}_{m,n}(\om,x) = \sum_{k,l\in\Z} r_{sv}[n-m+pk,l] e^{-2\pi iablm}e^{-2\pi i (blx+ak\om)}\quad
m,n=0,\ldots,p-1.
\end{equation}
is invertible for all $(\om,x)\in [0,1/a)\times [0,1/b)$, which is equivalent to $\det(\bPhi(\om,x))\neq 0$ for all
$(\om,x)$.
\end{proposition}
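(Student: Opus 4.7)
My plan is to follow exactly the same three-step argument as in Proposition~\ref{prop:direct sum p=1}, generalized from ordinary 2D convolution to the twisted convolution $\nat$ of (\ref{eq:twc}), and from the scalar DTFT criterion for invertibility to the matrix-valued $\bPhi$-criterion from \cite{WMES07} already quoted around (\ref{eq:phiHdef}). The skeleton is: (i) by the general Hilbert-space characterization of \cite{EW05}, $\Sp^\perp \oplus \V = \Lt(\R)$ is equivalent to continuous invertibility of $S^{\ast} V$ on $\lt(\Zt)$; (ii) $S^{\ast} V$ is a twisted-convolution operator whose kernel is the cross-correlation sequence $r_{sv}[k,l] = \inner{v}{M_{bl}T_{ak}s}$; (iii) twisted convolution by $r_{sv}$ is invertible on $\lt(\Zt)$ if and only if its matrix symbol $\bPhi^{sv}(\om,x)$ is invertible for almost every $(\om,x)$, which is the condition in the statement.

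The heart of the argument is step (ii). Expanding
\[
(S^{\ast} V c)[k,l] = \sum_{m,n\in\Z} c_{m,n}\,\inner{M_{bn}T_{am}v}{M_{bl}T_{ak}s}
\]
and invoking the commutation identity already displayed in Section~\ref{sec:not} (in its obvious bilinear form with $v,s$ in place of $f$), the inner product above rearranges to $r_{sv}[k-m,\,l-n]$ multiplied by a phase factor of the form $e^{-2\pi i ab(\cdot)}$. A reindexing $m \mapsto k-m$, $n \mapsto l-n$ then puts $(S^{\ast}Vc)[k,l]$ into the twisted-convolution form (\ref{eq:twc}) with kernel $r_{sv}$. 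When $p=1$ the phase factor collapses to $1$ and this reduces to the ordinary convolution identity used in the proof of Proposition~\ref{prop:direct sum p=1}.

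For step (iii) I would quote the result from \cite{WMES07}, already invoked in the text after (\ref{eq:phiHdef}): twisted convolution by $h \in \lo(\Zt)$ is invertible on $\lt(\Zt)$ if and only if the $(p\times p)$ matrix-valued function $\bPhi^h(\om,x)$ is invertible for almost every $(\om,x)\in\R^2$. Applied to $h = r_{sv}$, this matrix coincides with the $\bPhi^{sv}(\om,x)$ given in the statement. Since $s, v \in \SO$ ensures $r_{sv} \in \lo(\Zt)$, the entries of $\bPhi^{sv}$ are continuous in $(\om,x)$ and $(1/a, 1/b)$-periodic (both standard consequences of the $\bPhi$-construction), so almost-everywhere invertibility is equivalent to the pointwise condition $\det \bPhi^{sv}(\om,x) \neq 0$ on the compact fundamental domain $[0, 1/a) \times [0, 1/b)$, yielding the determinantal formulation.

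The main obstacle is the phase bookkeeping in step (ii): one must carefully track the $e^{2\pi i ab(\cdot)}$ factors picked up when pulling time-frequency shifts through the inner product, so as to verify that the operator $S^{\ast} V$ is twisted convolution by $r_{sv}$ \emph{itself} (and not by a phase-modulated or reflected variant, which would alter the associated matrix symbol). Once this identification is nailed down, steps (i) and (iii) are direct appeals to results already set up in the paper.
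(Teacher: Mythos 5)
Your proposal is correct and follows essentially the same route as the paper's own proof: reduce the direct-sum condition to continuous invertibility of $S^{\ast}V$ via \cite{EW05}, identify $S^{\ast}V$ as twisted convolution with $r_{sv}[k,l]$ by tracking the commutation phases, and then invoke the $\bPhi$-criterion of \cite{WMES07} for invertibility in $\lo(\Zt,\nat)$. The phase bookkeeping you flag as the main obstacle is exactly the computation the paper carries out, and your sketch of it (inner product rearranging to $r_{sv}[k-m,l-n]$ times $e^{-2\pi i ab(k-m)n}$ after reindexing) is accurate.
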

\begin{proof}
The proof is similar to the proof of Proposition \ref{prop:direct sum p=1}. Since $s(t)$ and $v(t)$ generate Riesz
bases for $\Sp$ and $\V$ respectively, the condition $\Sp^{\perp} \oplus \V = \Lt(\R)$ is satisfied if and only if
the operator $S^{\ast} V$ is continuously invertible on $\lt$, where $S^*$ and $V$ are the analysis and synthesis
operators associated to $\G(s,a,b)$ and $\G(v,a,b)$ respectively. By definition, for any sequence $c\in\lt(\Zt)$, we
have
\begin{align*}
(S^{\ast} Vc)[k,l] &= \inner{\sum_{m,n\in\Z} c_{m,n}M_{bn} T_{am} v}{M_{bl} T_{ak} s}\\
&= \sum_{m,n\in\Z} c_{m,n} \inner{v}{M_{bl-bn} T_{ak-am} s}e^{-2\pi i (bl-bn)am} \\
&= \sum_{m,n\in\Z} c_{k-m,l-n} \inner{v}{M_{bn} T_{am} s} e^{-2\pi i ab(k-m)n}\\
&= (\twc{r_{sv}}{c})[k,l].
\end{align*}
Therefore, $S^{\ast} V$ is a twisted convolution operator with kernel
\begin{equation}
r_{sv}[k,l] = \inner{v}{M_{bl} T_{ak} s},
\end{equation}
and $S^{\ast} V$ is invertible if and only if $r_{sv}[k,l]$ is invertible in the twisted convolution algebra
$\lo(\Zt,\nat)$. As shown in Section~\ref{sec:GaborRiesz}, this sequence has a representation $\bPhi^{sv}(\om,x)$
defined by (\ref{eq:phiHdef}) and so is invertible if and only if this matrix is invertible. Therefore, $\Sp^{\perp}
\oplus \V = \Lt(\R)$ if and only if $\bPhi^{sv}(\om,x)$ is invertible for all $\om$ and $x$.
\end{proof}

Note that for $p=1$, the above proposition reduces to Proposition \ref{prop:direct sum p=1}. When $p\neq1$, we
conclude from Proposition~\ref{prop:direct sum} that the direct sum condition translates to the requirement that
$r_{sv}[k,l]$ be invertible in the twisted convolution algebra, which can be checked by analyzing its
$\bPhi$-representation. An alternative method for checking weather $r_{sv}[k,l]$ is invertible with respect to
$\nat$, is presented in Section \ref{sec:twc}. It involves only the sequence $r_{sv}[k,l]$ without introducing the
continuous variables $\om$ and $x$, making it more attractive in some cases.

As in Section~\ref{sec:consGaborInteger}, to obtain a consistent recovery $\tilde{f}(t)$, we have to apply the
operator $H_{\text{con}}=(S^*V)^{-1}$ to the Gabor coefficients $c_{k,l}$. However, as opposed to the case $p=1$,
where $H_{\text{con}}$ was a standard convolution operator, here it corresponds to a twisted convolution operation.
This is due to the fact that time-frequency shift operators do not commute for $p\neq 1$. Specifically, in the proof
of Proposition \ref{prop:direct sum}, it was shown that $S^{\ast} V$ corresponds to twisted convolution with
$r_{sv}[k,l]$. Therefore, $(S^*V)^{-1}$ corresponds to twisted convolution with the sequence $r_{sv}^{-1}[k,l]$,
which is the inverse of $r_{sv}[k,l]$ in the twisted convolution algebra $\lo(\Z^2,\nat)$. This inverse exists, since
we assumed that the spaces generated by $s(t)$ and $v(t)$ satisfy the direct-sum condition, and it will be shown in
the next section how to construct it.

One can write the twisted convolution relation between the Gabor transform $c_{k,l}$ and the expansion coefficients
$d_{k,l}$ in terms of their $\bPhi$-representations. Specifically, since $d=(S^*V)^{-1}c$, we have
$c_{k,l}=(r_{sv}\nat d)[k,l]$ and therefore
\begin{equation}
\bPhi^c(\om,x) = \bPhi^d(\om,x) \bPhi^{sv}(\om,x),
\end{equation}
where $\bPhi^c(\om,x)$, $\bPhi^d(\om,x)$ and $\bPhi^{sv}(\om,x)$ are the $p\times p$-matrix-valued
$\bPhi$-representations of the sequences $c_{k,l}$, $d_{k,l}$ and $r_{sv}[k,l]$ respectively, defined in
(\ref{eq:Phi-def}). Therefore, to obtain the sequence $d_{k,l}$ from the Gabor coefficients $c_{k,l}$, we apply a
twisted convolution filter, whose $\bPhi$ function is
\begin{equation}\label{eq:H_sv}
\boldsymbol{H}_{\text{con}}(\om,x) = \bPhi^{sv}(\om,x)^{-1}.
\end{equation}
The twisted convolution operation can be modeled as a filter bank which is specified by the convolutional inverse of
$r_{sv}[k,l]$, as we show in Section~\ref{sec:twc}.

During the operations of sampling and pre-processing of the samples $c_{k,l}$ we in fact compute a dual Riesz basis
for the synthesis space $\V$. If the synthesis and analysis spaces are the same, namely $\Sp = \V$, we compute the
orthogonal dual basis. However, when the spaces are different we compute a general (oblique) dual Riesz basis for
$\V$.
\begin{proposition}
Let $\G(s,a,b)$ and $\G(v,a,b)$ be Riesz sequences that span the spaces $\Sp$ and $\V$ respectively, and $ab=q/p$
with $p$ and $q$ relatively prime. Assume that $\Sp^{\perp} \oplus \V = \Lt(\R)$. Then a dual Riesz basis for the
space $\V$ is $\G(g,a,b)$ with
\begin{equation}
g(t) = \sum_{m,n \in \Z} \overline{h_{\text{con}}[m,n]}T_{-am}M_{-bn}s(t) \in \Sp\,,
\end{equation}
where $h_{\text{con}}[m,n]$ is the inverse of $r_{sv}[k,l]$ with respect to $\nat$.
\end{proposition}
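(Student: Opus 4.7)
The plan is to mirror the proof of the corresponding proposition in the integer-undersampling case in Section~\ref{sec:consGaborInteger}, replacing standard 2D convolution throughout by twisted convolution and carefully tracking the additional phase factors that arise from the noncommutativity of the time-frequency shifts when $p\neq 1$.

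First, I would verify that $\G(g,a,b)$ is a Riesz basis for its closed linear span inside $\Sp$. Using the commutation law $T_{-am}M_{-bn} = e^{-2\pi i abmn}M_{-bn}T_{-am}$ together with a sign reflection of the summation indices, one rewrites $g$ in the form $g = \sum_{m,n}w_{m,n}M_{bn}T_{am}s$, where $w_{m,n}$ differs from $\overline{h_{\text{con}}[-m,-n]}$ only by a unimodular phase. Proposition~\ref{prop:equivalent basis} then reduces the Riesz basis property to the boundedness and invertibility of $\bPhi^{w}(\om,x)$. Since $h_{\text{con}}$ is by definition the $\nat$-inverse of $r_{sv}$ in $\lo(\Zt,\nat)$, its $\bPhi$-representation equals $\bPhi^{sv}(\om,x)^{-1}$, which is bounded and invertible by the direct-sum hypothesis via Proposition~\ref{prop:direct sum}; the unimodular reindexing carrying $h_{\text{con}}$ to $w$ preserves these bounds.

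The core calculation is the identity $\inner{f}{M_{bl}T_{ak}g} = (\twc{h_{\text{con}}}{c})[k,l]$ for every $f\in\Lt(\R)$, where $c_{k,l}=\inner{f}{M_{bl}T_{ak}s}$. Substituting the definition of $g$ and pulling the sum outside the inner product yields
\begin{equation*}
\inner{f}{M_{bl}T_{ak}g} = \sum_{m,n\in\Z} h_{\text{con}}[m,n]\,\inner{f}{M_{bl}T_{a(k-m)}M_{-bn}s}.
\end{equation*}
I would then use the commutation law to merge the triple operator $M_{bl}T_{a(k-m)}M_{-bn}$ into a single time-frequency shift $M_{b(l-n)}T_{a(k-m)}$ times a phase $e^{2\pi i abn(k-m)}$, giving
\begin{equation*}
\inner{f}{M_{bl}T_{ak}g} = \sum_{m,n\in\Z} h_{\text{con}}[m,n]\,e^{-2\pi i abn(k-m)}\,c_{k-m,l-n},
\end{equation*}
which is precisely $(\twc{h_{\text{con}}}{c})[k,l]$ in the same form that appeared in the proof of Proposition~\ref{prop:direct sum} when expanding $S^\ast V$.

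Since $d_{k,l}=(\twc{h_{\text{con}}}{c})[k,l]$ are, by the consistency construction, the expansion coefficients of the recovered signal in the basis $\G(v,a,b)$, every $f\in\V$ satisfies $f(t) = \sum_{k,l\in\Z}\inner{f}{M_{bl}T_{ak}g}M_{bl}T_{ak}v(t)$. Specializing this identity to $f = M_{bn}T_{am}v\in\V$ and invoking the linear independence of $\G(v,a,b)$ yields the biorthogonality $\inner{M_{bn}T_{am}v}{M_{bl}T_{ak}g} = \delta_{m-k}\delta_{n-l}$, so that $\G(g,a,b)$ is a dual Riesz basis to $\G(v,a,b)$. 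The principal technical obstacle is the careful accounting of phase factors: every invocation of the commutation law contributes a factor $e^{\pm 2\pi i ab\cdot(\text{integer})}$ that no longer collapses to $1$ when $ab\notin\Z$, and these factors must combine exactly into the twisted convolution kernel for the identification with $\nat$ to go through.
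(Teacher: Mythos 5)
Your proposal is correct and follows essentially the same route as the paper's proof: the same commutation-law computation identifying $\inner{f}{M_{bl}T_{ak}g}$ with $(\twc{h_{\text{con}}}{c})[k,l]$, and the same appeal to Proposition~\ref{prop:equivalent basis} for the Riesz-basis property of $\G(g,a,b)$. The only (harmless) difference is in the last step: you obtain the biorthogonality relation by specializing the reconstruction formula to $f=M_{bn}T_{am}v$ and invoking uniqueness of the Riesz expansion coefficients, whereas the paper verifies $\inner{M_{bl}T_{ak}v}{M_{bn}T_{am}g}=\de_{m-k}\de_{n-l}$ by a direct computation using $\twc{h_{\text{con}}}{r_{sv}}=\de$.
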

\begin{proof}
Any signal in $\V$, that has been sampled with the Riesz sequence $\G(s,a,b)$ resulting in the coefficients
$c_{k,l}$ given by (\ref{eq:analysis}), can be recovered from the corrected samples $d_{k,l} =
(\twc{h_{\text{con}}}{c})[k,l]$, where $h_{\text{con}}[k,l]=r^{-1}_{sv}[k,l]$ is the inverse of $r_{sv}[k,l]$ with
respect to $\nat$, via $f(t) = \sum_{m,n\in\Z}d_{k,l}M_{bl}T_{ak}v(t)$. This sequence may be viewed as the
coefficients in a basis expansion. To obtain the corresponding basis we note that by combining the effects of the
analysis window $s(t)$ and the correction twisted-convolution filter $h_{\text{con}}[k,l]$, the expansion
coefficients can be equivalently expressed as $d_{k,l} = \inner{f}{M_{bl}T_{ak}g}$ where
\begin{equation}
g(t) = \sum_{m,n \in \Z} \overline{h_{\text{con}}[m,n]}T_{-am}M_{-bn}s(t) \in \Sp.
\end{equation}
Indeed,
\begin{align}
\inner{f}{M_{bl}T_{ak}g} &= \inner{f}{M_{bl}T_{ak} \left(\sum_{m,n \in \Z} \overline{h_{\text{con}}[m,n]}T_{-am}M_{-bn}s \right)} \nonumber \\
&= \inner{f}{\sum_{m,n \in \Z} \overline{h_{\text{con}}[m,n]}M_{bl}T_{ak}T_{-am}M_{-bn}s} \nonumber\\
&= \inner{f}{\sum_{m,n \in \Z} \overline{h_{\text{con}}[m,n]}e^{2\pi iab(k-m)n}M_{bl-bn}T_{ak-am}s},
\end{align}
and using the linearity of the inner product, we have
\begin{align}
\inner{f}{M_{bl}T_{ak}g} &= \sum_{m,n \in \Z} h_{\text{con}}[m,n]e^{-2\pi i ab(k-m)n}\inner{f}{M_{bl-bn}T_{ak-am}s} \nonumber\\
&= \sum_{m,n \in \Z} h_{\text{con}}[m,n]e^{-2\pi i ab(k-m)n}c_{k-m,l-n} \nonumber\\
&= (\twc{h_{\text{con}}}{c})[k,l] = d_{k,l}.
\end{align}
Therefore, any $f\in \V$ can be written as
\begin{equation}
f(t) = \sum_{k,l\in\Z} \inner{f}{M_{bl}T_{ak}g}M_{bl}T_{ak}v(t).
\end{equation}

It can be easily verified, by Proposition \ref{prop:equivalent basis}, that $\G(g,a,b)$ is an equivalent Riesz basis
for $\Sp$. Now, for it to be a dual Riesz basis to $\G(v,a,b)$ we need to check that
\begin{equation}
\inner{M_{bl}T_{ak}v}{M_{bn}T_{am}g} = \de_{m-k}\de_{n-l}.
\end{equation}
Indeed,
\begin{align*}
\inner{M_{bl}T_{ak}v}{M_{bn}T_{am}g} &= e^{2\pi i ab(l-n)k}\inner{v}{M_{b(n-l)}T_{a(m-k)}g}\\
&= e^{2\pi i ab(l-n)k}\sum_{x,y\in\Z} h_{\text{con}}[x,y] \inner{v}{M_{b(n-l-y)}T_{a(m-k-x)}s} e^{-2\pi i ab(m-k-x)y}\\
&= e^{2\pi i ab(l-n)k}\sum_{x,y\in\Z} h_{\text{con}}[x,y]r_{sv}[m-k-x,n-l-y]e^{-2\pi i ab(m-k-x)y}\\
&= e^{2\pi i ab(l-n)k}(\twc{h_{\text{con}}}{r_{sv}})[m-k,n-l] = \de_{m-k}\de_{n-l},
\end{align*}
where we used the fact that $M_{bn}T_{am}g(t) = \sum_{x,y\in\Z} \overline{h_{\text{con}}[x,y]}e^{2\pi i
ab(m-x)y}M_{b(n-y)}T_{a(m-x)}s(t)$ and that $h_{\text{con}}$ is the inverse of $r_{sv}$ with respect to $\nat$.
\end{proof}

\subsection{Minimax regret synthesis}
\label{sec:minimaxGaborRational}

Next, we develop a minimax-regret reconstruction method for non-invertible Gabor transforms with rational
under-sampling. Our goal here, as in Section~\ref{sec:minimaxGaborInteger}, is to minimize the worst case regret
$\max_{f\in\B} \{\|\tilde{f}-f\|^2 -\|P_{\V^\perp} f\|^2\}$, where $\B$ is the set of bounded-norm signals whose
Gabor coefficients coincide with $c_{k,l}$. As discussed in Section~\ref{sec:minimaxSI}, The recovery $\tilde{f}$
attaining the minimum can be obtained by applying the operator $H_{\text{mx}}=(V^*V)^{-1}S^*V(S^*S)^{-1}$ on the
Gabor coefficients $c_{k,l}$ prior to synthesis. However, as opposed to the integer under-sampling case discussed in
Section~\ref{sec:minimaxGaborInteger}, where $V^*V$, $S^*V$, and $S^*S$ were convolution operators, here they
correspond to twisted convolutions with $r_{vv}[k,l]$, $r_{sv}[k,l]$ and $r_{ss}[k,l]$ respectively. Therefore, to
obtain the sequence $d_{k,l}$, we apply a twisted convolution filter on the Gabor coefficients $c_{k,l}$, whose
impulse response is
\begin{equation}
h_{\text{mx}}[k,l] = \left(\twc{r_{vv}^{-1}}{\twc{r_{sv}}{r_{ss}^{-1}}}\right)[k,l].
\end{equation}
Here, $r_{vv}^{-1}[k,l]$ and $r_{ss}^{-1}[k,l]$ are the inverses of $r_{vv}[k,l]$ and $r_{ss}[k,l]$ with respect to
$\nat$. Consequently, the $\bPhi$ function of the minimax-regret filter is given by
\begin{equation}\label{eq:H_mx}
\boldsymbol{H}_{\text{mx}}(\om,x) = \bPhi^{ss}(\om,x)^{-1}\bPhi^{sv}(\om,x)\bPhi^{vv}(\om,x)^{-1},
\end{equation}
where $\bPhi^{ss}(\om,x)$, $\bPhi^{sv}(\om,x)$, and $\bPhi^{vv}(\om,x)$ are the $\bPhi$-representations of
$r_{ss}[k,l]$, $r_{sv}[k,l]$ and $r_{vv}[k,l]$ respectively.

\subsection{Extension to symplectic lattices}
\label{sec:GaborSymplectic}

Throughout the current and previous sections, we considered a special type of sampling points in the time-frequency
plane, called separable lattices $\La = a\Z \times b\Z$. However, with the help of metaplectic operators, these
results carry over to the more general class of lattices, called symplectic lattices. A lattice $\La_s \subseteq
\R^2$ is called symplectic, if one can write $\La_s = \mathcal{D}\La$ where $\La$ is a separable lattice and
$\mathcal{D}\in GL_2(\R)$, meaning it is an invertible $2\times 2$ matrix with determinant $1$ \cite{G01}. To every
$\mathcal{D} \in GL_2(\R)$ there corresponds a unitary operator $\mu(\mathcal{D})$, called metaplectic, acting on
$\Lt(\R)$. One can show that a Gabor system on a symplectic lattice is unitarily equivalent to a Gabor system on a
separable lattice under $\mu(\mathcal{D})$, that is $\G(g,\La_s)$ is a frame/Riesz basis if and only if
$\G(\mu(\mathcal{D})^{-1}g,\La)$ is a frame/Riesz basis, and
\begin{equation}
\G(g,\La_s) = \mu(\mathcal{D})\G(\mu(\mathcal{D})^{-1}g,\La)\,.
\end{equation}
Therefore, instead of considering a representation of $f(t)$ in $\overline{\rm span}\{ g_{\la} \}_{\la\in\La_s}$ one
can look at the representation of $f(t)$ in $\overline{\rm span}\{ \mu(\mathcal{D})^{-1}g_{\la} \}_{\la\in\La}$. For
more details see \cite{G01}.


\section{Subspace-prior synthesis}\label{sec:subspaceGabor}
\label{sec:subspaceGabor}

In the previous two sections we attempted to recover a signal from its non-invertible Gabor representations without
using any prior knowledge on the signal. When such knowledge is available, it can significantly reduce the
reconstruction error and in some cases even lead to perfect recovery. A common prior in sampling theory is that the
signal to be recovered lies in some SI subspace of $\Lt$, namely that it can be written as
\begin{equation}
f(t) = \sum_{k\in\Z} d_k T_{ak}w(t) = \sum_{k\in\Z} d_k w(t-ak)
\end{equation}
with some norm-bounded sequence $\{d_k\}$ and some window $w(t)$. This model can quite accurately describe many types of natural signals, which exhibit a
certain degree of smoothness. For example, the class of bandlimited signals is the SI space generated by the sinc
window. The class of splines of degree $N$ also follows this description with $w(t)$ being the B-spline function of
degree $N$.

Here, we would like to generalize the SI-prior setting to Gabor spaces, which we also term in this context
\emph{shift-and-modulation-invariant} (SMI) spaces. We will use these spaces as priors on our input signals, in order
to recover them from their non-invertible Gabor transform. An SMI subspace $\W\subseteq\Lt$ is the set of signals
that can be represented in the form
\begin{equation}\label{eq:finGaborW}
f(t) = \sum_{k,l\in\Z} h_{k,l}M_{bl}T_{ak}w(t),
\end{equation}
for some sequence $h_{k,l}$ in $\lt(\Zt)$, where $w(t)$ is an arbitrary window in $\SO$. In other words, $\W$ is the
closed linear span of the Gabor system $\G(w,a,b)$. Our choice of terminology follows from the fact that if $f(t)$
lies in $\W$, then the function $M_{bl}T_{ak}f(t)$ is also an element of $\W$ for every fixed $k,l\in\Z$. Indeed, let
$f(t)=\sum_{m,n\in\Z} h_{m,n}M_{bn}T_{am}w(t)$ for some sequence $h_{m,n}$, then
\begin{align}
M_{bl}T_{ak}f(t) &= M_{bl}T_{ak}\left( \sum_{m,n\in\Z} h_{m,n}M_{bn}T_{am}w(t) \right)\nonumber\\
&= \sum_{m,n\in\Z} h_{m,n}M_{bl}T_{ak}M_{bn}T_{am}w(t)\nonumber\\
&= \sum_{m,n\in\Z} h_{m,n} e^{-2\pi i abkn} M_{b(n+l)}T_{a(m+k)}w(t)\nonumber\\
&= \sum_{m,n\in\Z} h_{m-k,n-l}e^{-2\pi i ab(n-l)k}M_{bn}T_{am}w(t)\nonumber\\
&= \sum_{m,n\in\Z} d_{m,n}M_{bn}T_{am}w(t) \in \W,
\end{align}
where $d_{m,n} = h_{m-k,n-l}e^{-2\pi i ab(n-l)k}$. The same holds for $T_{ak}M_{bl}f(t)$.

Our setting is thus as follows. We assume that $f(t)$ lies in some SMI space $\W$, generated by $\G(w,a,b)$, which we
term the prior space, and that we are given the Gabor coefficients $c_{k,l}$ of $f(t)$, which were computed with the
analysis window $s(t)$. Our goal is to produce a recovery $\tilde{f}(t)$ using the synthesis window $v(t)$. Clearly,
if $\W$ does not coincide with our synthesis space $\V$, then the reconstruction $\tilde{f}(t)$ cannot equal $f(t)$.
The interesting question is whether we can obtain the best possible recovery, which is the orthogonal projection
$\tilde{f}=P_{\V}f$, from the Gabor coefficients $c_{k,l}$ of $f(t)$. As above, we discuss the integer and rational
under-sampling cases separately.

\subsection{Integer under-sampling}
\label{sec:subspaceGaborInteger}

As discussed in Section~\ref{sec:subspaceSI}, if the analysis and prior spaces satisfy $\Sp^{\perp} \oplus \W =
\Lt(\R)$, then the recovery $\tilde{f}=P_{\V}f$ can be generated by applying the operator $H_{\text{sub}} =
(V^*V)^{-1}V^*W(S^*W)^{-1}$ on the Gabor coefficients $c_{k,l}$ prior to synthesis. From Proposition~\ref{prop:direct
sum p=1} we know that this direct-sum condition is satisfied if and only if $\Phi^{sw}(\om,x)\neq 0$ for all $\om$
and $x$, where $\Phi^{sw}(\om,x)$ is as in (\ref{eq:phi_sv}) with $v(t)$ replaced by $w(t)$. The operators $V^*V$,
$V^*W$ and $S^*W$ correspond to 2D convolutions with the kernels $r_{vv}[k,l]$, $r_{vw}[k,l]$ and $r_{ww}[k,l]$
respectively, which are given by (\ref{eq:r_sv}) with the appropriate substitution of $s(t)$, $v(t)$ and $w(t)$.
Hence, the operator $H_{\text{sub}}$ corresponds to 2D convolution with the filter $h_{\text{sub}}$, whose 2D DTFT is
given by
\begin{equation}\label{eq:HsubGabor}
H_{\text{sub}}(\om,x) = \frac{\Phi^{vw}(\om,x)}{\Phi^{sw}(\om,x)\Phi^{vv}(\om,x)},
\end{equation}
where $\Phi^{vw}(\om,x)$, $\Phi^{sw}(\om,x)$, and $\Phi^{vv}(\om,x)$ are the 2D DTFTs of $r_{vw}[k,l]$, $r_{sw}[k,l]$
and $r_{vv}[k,l]$ respectively.

When the synthesis space $\V$ coincides with the prior space $\W$, we have $H_{\text{sub}} =
(V^*V)^{-1}V^*W(S^*W)^{-1}=(S^*W)^{-1}$, so that the correction filter is the same as in the consistency approach of
section~\ref{sec:consGaborInteger}. In this case, the direct-sum condition (namely the invertibility of the operator
$S^*W$) guarantees perfect recovery of $f(t)$. To see this, note that any $f\in\W$ can be written as $f=Wd$ for some
sequence $d_{k,l}$, so that the Gabor coefficients $c_{k,l}$ are given by $c=S^*f=S^*Wd$. Therefore, the expansion
coefficients can be perfectly recovered using $d=(S^*W)^{-1}c$. This property is, of course, independent of the
sampling lattice and holds true also in the rational under-sampling regime.

\subsection{Rational under-sampling}
\label{sec:subspaceGaborRational}

We now extend the subspace-prior approach to the rational under-sampling regime. As before, we assume that the input
$f(t)$ can be expressed in the form (\ref{eq:finGaborW}) for some sequence $h_{k,l}$, where $w(t)$ is a given window
in $\SO$. As we have seen, the best possible recovery, which is the orthogonal projection $\tilde{f}=P_{\V}f$, can be
obtained if the analysis and prior spaces satisfy $\Sp^\perp \oplus \W = \Lt(\R)$, which in our case is equivalent to
$r_{sw}[k,l]$ being invertible with respect to twisted convolution. In this case, $\tilde{f}=P_\V f$ can be produced
by applying the operator $H_{\text{sub}}=(V^*V)^{-1}V^*W(S^*W)^{-1}$ on the Gabor transform $c_{k,l}$ prior to
reconstruction. The operators $V^*V$, $V^*W$, and $S^*W$ correspond to twisted convolution with the kernels
$r_{vv}[k,l]$, $r_{v,w}[k,l]$ and $r_{s,w}[k,l]$ respectively. Therefore, $H_{\text{sub}}$ corresponds to twisted
convolution with
\begin{equation}
h_{\text{sub}}[k,l] = \left(\twc{r_{vv}^{-1}}{\twc{r_{vw}}{r_{sw}^{-1}}}\right)[k,l],
\end{equation}
where, $r_{vv}^{-1}[k,l]$ and $r_{sw}^{-1}[k,l]$ are the inverses of $r_{vv}[k,l]$ and $r_{sw}[k,l]$ with respect to
$\nat$. Consequently, the $\bPhi$ function of the subspace-prior filter is given by
\begin{equation}\label{eq:H_sub}
\boldsymbol{H}_{\text{mx}}(\om,x) = \bPhi^{sw}(\om,x)^{-1}\bPhi^{vw}(\om,x)\bPhi^{vv}(\om,x)^{-1},
\end{equation}
where $\bPhi^{sw}(\om,x)$, $\bPhi^{vw}(\om,x)$, and $\bPhi^{vv}(\om,x)$ are the $\bPhi$-representations of
$r_{sw}[k,l]$, $r_{vw}[k,l]$ and $r_{vv}[k,l]$ respectively.


\section{Twisted Convolution}\label{sec:twc}

In the previous sections, we saw that in order to process the samples $c_{m,n}$ one needs the inverse of certain
cross-correlation sequences with respect to $\nat$. In this section we show how to obtain explicitly the inverse of
some sequence $d_{k,l}$ with respect to twisted convolution with parameter $ab$. This depends very much on $ab$. If
$ab \in \mathbb{N}$, then the twisted convolution is a standard convolution, and the Fourier transform can be used to
compute the inverse of $d_{k,l}$. If $ab = q/p$, then one can use the construction derived in \cite{EMW07}, which
breaks the problem into computing inverses of several sequences with respect to standard convolution. We now briefly
review this method. For the proofs and more detailed explanations, we refer the reader to the original paper.

Let $d_{k,l}$ be a sequence in $\lo(\Zt)$. We create $p^2$ new sequences out of $d_{k,l}$, defined as
\begin{equation}\label{eq:seq_decomp}
(d^{r,s})_{k,l} = d_{k,l}\sum_{m\in\Z}\sum_{n\in\Z}\delta[k-r-pm,l-s-pn],
\end{equation}
where $r,s=0,1,\ldots,p-1$. 
It is easy to see that the sequence $d^{r,s}$ is supported on the coset $(r + p\Z) \times (s + p\Z)$ and therefore $d
= \sum_{r=0}^{p-1}\sum_{s=0}^{p-1} d^{r,s}$. In the case when $p=2$, out of a sequence $d_{k,l}$ we obtain four
subsequences: $d^{0,0}$ which is supported on $2\Z\times 2\Z$, $d^{0,1}$ supported on $2\Z \times (2\Z +1)$,
$d^{1,0}$ supported on $(2\Z + 1)\times 2\Z$ and $d^{1,1}$ supported on $(2\Z + 1) \times (2\Z + 1)$.

Next, we associate with the sequence $d_{k,l}$ a $p\times p$ matrix $D$ whose entries are sequences in $\lo$:
\begin{equation}\label{eq:seq-matrix}
D_{r,s} = \sum_{m=0}^{p-1} d^{m,r-s} e^{-2\pi i ms q/p},
\end{equation}
where $r-s$ should be interpreted as modulo $p$. This matrix is an element of an algebra $\M$ of $p\times p$ matrices
with multiplication of two matrices $D$ and $E$ given by
\begin{equation}
(D\circledast E)_{r,s} = \sum_{m=0}^{p-1} \conv{D_{r,l}}{E_{l,s}}\,,
\end{equation}
where $\ast$ is a standard convolution. It was shown in \cite{EMW07} that an algebra of such matrices is closed under
taking inverses, meaning that if $D$ is invertible in $\M$ then its inverse is also an element of $\M$ and its
entries are also coming from some sequence in $\lo(\Zt)$. For example, when $p=2$ the above matrix takes the form
\begin{equation*}
D = \left ( \begin{array}{cc}
d^{0,0} + d^{1,0} & d^{0,1} - d^{1,1} \\
d^{0,1} + d^{1,1} & d^{0,0} - d^{1,0}
        \end{array} \right )\,
\end{equation*}
where we used the fact that since $p=2$, $q$ must be odd, and thus $e^{2\pi i msq/2}$ for $m,s=0,1$ takes the values
$1$ and $-1$. Note that summing up the elements of the first column gives us back the sequence $d$.

It was shown in \cite{EMW07} that the invertibility of the sequence $d_{k,l}$ with respect to $\nat$ is equivalent to
the invertibility of the matrix $D$ in this new matrix algebra, which in turn is equivalent to the invertibility of
$\det(D)$ in $\lo(\Zt,\ast)$. Therefore, if $D$ is invertible, its inverse can be computed using Cramer's Rule. That
is the $(r,s)$ entry of $D^{-1}$ is given by
\begin{equation}\label{eq:Dinv}
(D^{-1})_{r,s} = \conv{(\det(D))^{-1}}{\det(D(s,r))},
\end{equation}
where $D(s,r)$ is a $p \times p$ matrix obtained from $D$ by substituting the $s$th row of $D$ with a vector of zeros
having $\de$ on the $r$th position, and the $r$th column with a column of zeros having $\de$ on the $s$th position.
Note that $\det(D)$ is a sequence and its inverse in (\ref{eq:Dinv}) is taken with respect to standard convolution.
For example, when $p=2$ we get
\begin{align}
&D(0,0) = \left (
\begin{array}{cc} \delta & 0 \\ 0 & d^{0,0} - d^{1,0} \end{array}
\right ),\nonumber\\
&D(1,0) = \left (
\begin{array}{cc} 0 & \delta \\ d^{0,1}+d^{1,1} & 0 \end{array} \right ),\nonumber\\
&D(0,1) = \left ( \begin{array}{cc} 0 & d^{0,1} - d^{1,1} \\ \delta & 0 \end{array} \right ),\nonumber\\
&D(1,1) = \left(
\begin{array}{cc} d^{0,0}+d^{1,0} & 0 \\ 0 & \delta \end{array} \right ).
\end{align}
Thus,
\begin{equation*}
D^{-1} = \conv{(\det D)^{-1}}{\left ( \begin{array}{cc}
d^{0,0} - d^{1,0} & -d^{0,1} + d^{1,1} \\
-d^{0,1} - d^{1,1} & d^{0,0} + d^{1,0}
                                      \end{array} \right ) },
\end{equation*}
where $\det(D) = \conv{(d^{0,0}+d^{1,0})}{(d^{0,0}-d^{1,0})} - \conv{(d^{0,1}+d^{1,1})}{(d^{0,1}-d^{1,1})}$. Since
the matrix algebra $\M$ is closed under taking inverses, summing up the elements of the first column of $D^{-1}$
results in some sequence $e_{k,l}$ which is the inverse of $d_{k,l}$ with respect to twisted convolution. Therefore,
it is enough to compute only this column and sum up its entries to get $d^{-1}$. In our example with $p=2$, the
twisted-convolutional-inverse of $d$ equals $\conv{(\det(D))^{-1}}{(d^{0,0} - d^{1,0} - d^{0,1} - d^{1,1})}$.

We mentioned in the previous sections that it is possible to realize twisted convolution with a rational parameter
$ab$ using a filter bank. Indeed, using the decomposition (\ref{eq:seq_decomp}) of the sequences, the twisted
convolution of two sequences $c$ and $d$,
\begin{equation}
(\twc{d}{c})_{m,n} = \sum_{k,l\in\Z} d_{k,l}c_{m-k,n-l} e^{-2\pi i ab(m-k)l} = \sum_{k,l\in\Z} c_{k,l} d_{m-k,n-l}
e^{-2\pi i ab(n-l)k}
\end{equation}
can be written as
\begin{equation}\label{twisted}
(\twc{d}{c}) = \sum_{r,s = 0}^{p-1} \sum_{u,v = 0}^{p-1} (\conv{c^{r,s}}{d^{u-r,v-s}}) e^{-2\pi i (v-s)r q/p} \quad
\mbox{for } u,v=0,1,\ldots,p-1.
\end{equation}
Therefore, as shown in Fig.~\ref{fig:twistFB}, each of the $p^2$ sequences $c^{r,s}$, $r,s=0,1,\ldots,p-1$, is split
into $p^2$ filters associated with the sequences $d^{u,v}$, $u,v=0,1,\ldots,p-1$. Then, $\twc{d}{c}$ is obtained by
summing over the resulting $p^4$ output sequences. Figure~\ref{fig:twistFB} depicts one of the $p^4$ branches, which
corresponds to the indices $r$, $s$ ,$u$ and $v$.

\begin{figure*}\centering
\includegraphics[scale=0.75, trim=0cm 0cm 0cm 0cm]{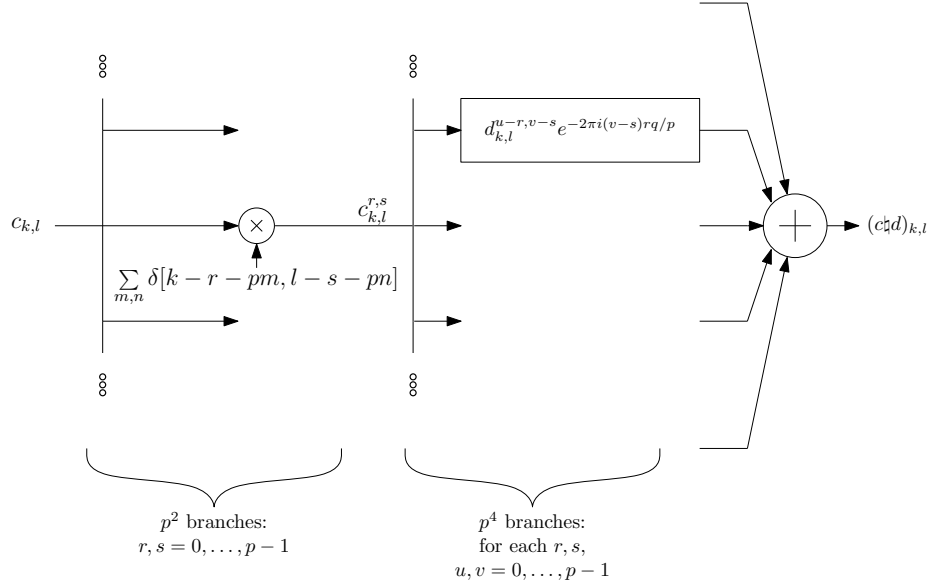}
\caption{A filter-bank realization of twisted convolution between $c_{k,l}$ and $d_{k,l}$.} \label{fig:twistFB}
\end{figure*}

\section{Example: Integer under-sampling with Gaussian windows}
\label{sec:example}

We now demonstrate the prior-free recovery techniques derived in this paper. To retain simplicity we will focus on
the integer under-sampling scenario. In this regime, the smallest amount of information loss occurs when $ab=2$.
Therefore, in our simulations we used $a=1$ and $b=2$. In this setting there are at most half the number of time-frequency coefficients for any given frequency range per time unit, than in any invertible Gabor representation. Consequently, algorithms operating in the Gabor domain (\eg for system identification, speech enhancement, blind source separation, etc.) will benefit from a reduction of at least a factor of $2$ in computational load. On the other hand, we expect the norm of the reconstruction error to be roughly on the order of the signal's norm in the worst-case scenario, since half of the information is lost in such a representation.

For tractability, we will work out the case in which the
analysis and synthesis are both performed with a Gaussian window:
\begin{align}
&s(t)=\frac{1}{\sqrt{2\pi\sigma_s^2}}\exp\left\{-\frac{t^2}{2\sigma_s^2}\right\}\\
&v(t)=\frac{1}{\sqrt{2\pi\sigma_v^2}}\exp\left\{-\frac{t^2}{2\sigma_v^2}\right\}.
\end{align}
In this scenario, the cross-correlation sequence $r_{sv}[k,l] = \inner{v}{M_{bn} T_{am} s}$, has an analytic
expression:
\begin{equation}\label{eq:r_svGauss}
r_{sv}[k,l]=\frac{1}{\sqrt{2\pi\left(\sigma_s^2+\sigma_v^2\right)}}\exp\left\{-\frac{(ak)^2+4\pi^2\sigma_s^2\sigma_v^2(bl)^2}
{2\left(\sigma_s^2+\sigma_v^2\right)}\right\} \exp\left\{\frac{2\pi i \sigma_s^2abkl}{\sigma_s^2+\sigma_v^2}\right\}.
\end{equation}
Similarly, $r_{ss}[k,l]$ and $r_{vv}[k,l]$ can be obtained by replacing $\sigma_s$ by $\sigma_v$ and vice versa.

The $2D$ filter $h_{\text{con}}$ of (\ref{eq:HconGabor}), corresponding to the consistency requirement, is the
convolutional inverse of $r_{ss}[k,l]$. This sequence can be approximated numerically using the discrete Fourier
transform (DFT) of the finite-length sequence $r_{sv}[k,l]$, $(k,l)\in[-K,K]\times[-L,L]$, for some (large) $K$ and
$L$. To compute the filter $h_{\text{mx}}$ of (\ref{eq:HmxGabor}), corresponding to the minimax-regret approach, we
need to invert $r_{ss}[k,l]$ and $r_{vv}[k,l]$, which can be done in a similar manner. Note that both
$h_{\text{con}}$ and $h_{\text{mx}}$ are generally complex sequences. Figure~\ref{fig:hmx_hcon} depicts the modulus
$|h_{\text{con}}|$ and $|h_{\text{mx}}|$ for the case $\sigma_s=0.1$, $\sigma_v=2$, and $ab=2$.


\begin{figure*}\centering
\includegraphics[width=\textwidth, trim=0cm 3cm 0cm 0cm ]{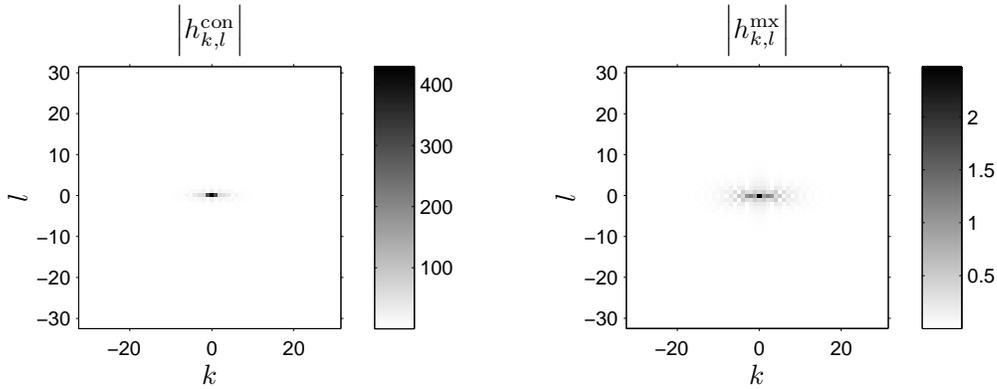}
\caption{The 2D correction filters corresponding to the minimax-regret and consistency methods.} \label{fig:hmx_hcon}
\end{figure*}

To see the effect of these two filters, we now examine the recovery of a chirp signal from its non-invertible Gabor
representation using both methods. Specifically, let
\begin{equation}
f(t)=2\cos\left(t^2\right).
\end{equation}
The Gaussian-window Gabor transform of $f(t)$ has a closed form expression, given by
\begin{align}
c_{k,l}=&\frac{1}{\sqrt{-2i\sigma_s^2+1}}\exp\left\{-\frac{ak(ak+2bl\pi)+2ib^2l^2\pi^2\sigma_s^2}{i+2\sigma_s^2}\right\}\nonumber\\
       &+ \frac{1}{\sqrt{2i\sigma_s^2+1}}\exp\left\{\frac{-ak(ak-2bl\pi)+2ib^2l^2\pi^2\sigma_s^2}{-i+2\sigma_s^2}\right\}.
\end{align}
The signal $f(t)$ and the modulus of its Gabor transform, $|c_{k,l}|$, are shown in Fig.~\ref{fig:f_and_c}. Although
$c_{k,l}$ seems to constitute a good time-frequency representation of $f(t)$, it is certainly not suited to play the
role of the synthesis expansion coefficients $d_{k,l}$. This can be seen in Fig.~\ref{fig:ChirpRec}(a), where
$c_{k,l}$ have been used without modification as expansion coefficients to produce a recovery $\tilde{f}(t)$. The
signal-to-noise ratio (SNR) of this recovery is $20\log_{10}(\|f\|/\|f-\tilde{f}\|)=-0.44$dB.

\begin{figure*}\centering
\includegraphics[width=\textwidth, trim=0cm 3cm 0cm 0cm ]{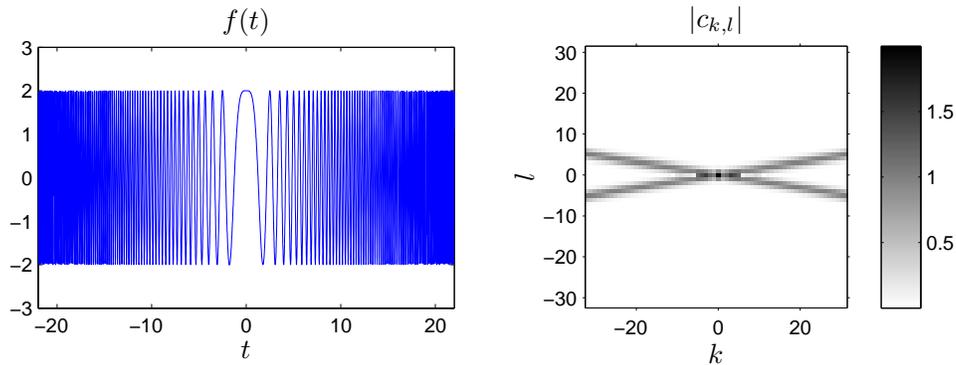}
\caption{A chirp signal and its Gaussian-window Gabor representation.} \label{fig:f_and_c}
\end{figure*}

The reconstructions obtained with the consistency and minimax-regret methods are shown in Fig.~\ref{fig:ChirpRec}(b)
and (c). Clearly, they both bear better resemblance to $f(t)$. The consistent recovery is the unique signal that can
be constructed with the synthesis window $v(t)$, whose Gabor transform coincides with $c_{k,l}$. This property makes
this reconstruction desirable in some sense, although its SNR is only $-1.03$dB, worse than the uncompensated
recovery. To guarantee that the error between our recovery $\tilde{f}(t)$ and the original signal $f(t)$ is small,
for every possible $f(t)$ that could have generated $c_{k,l}$, one has to use the minimax regret approach, as shown
Fig.~\ref{fig:ChirpRec}(c). This reconstruction achieves an SNR of $0.1$dB, and thus is better than the other two
methods in terms of reconstruction error. Figure~\ref{fig:d_con_mx} depicts the expansion coefficients $d_{k,l}$
corresponding to the two methods.

\begin{figure*}\centering
\subfloat[]{\includegraphics[scale=0.38, trim=0cm 0cm 0cm 0cm ]{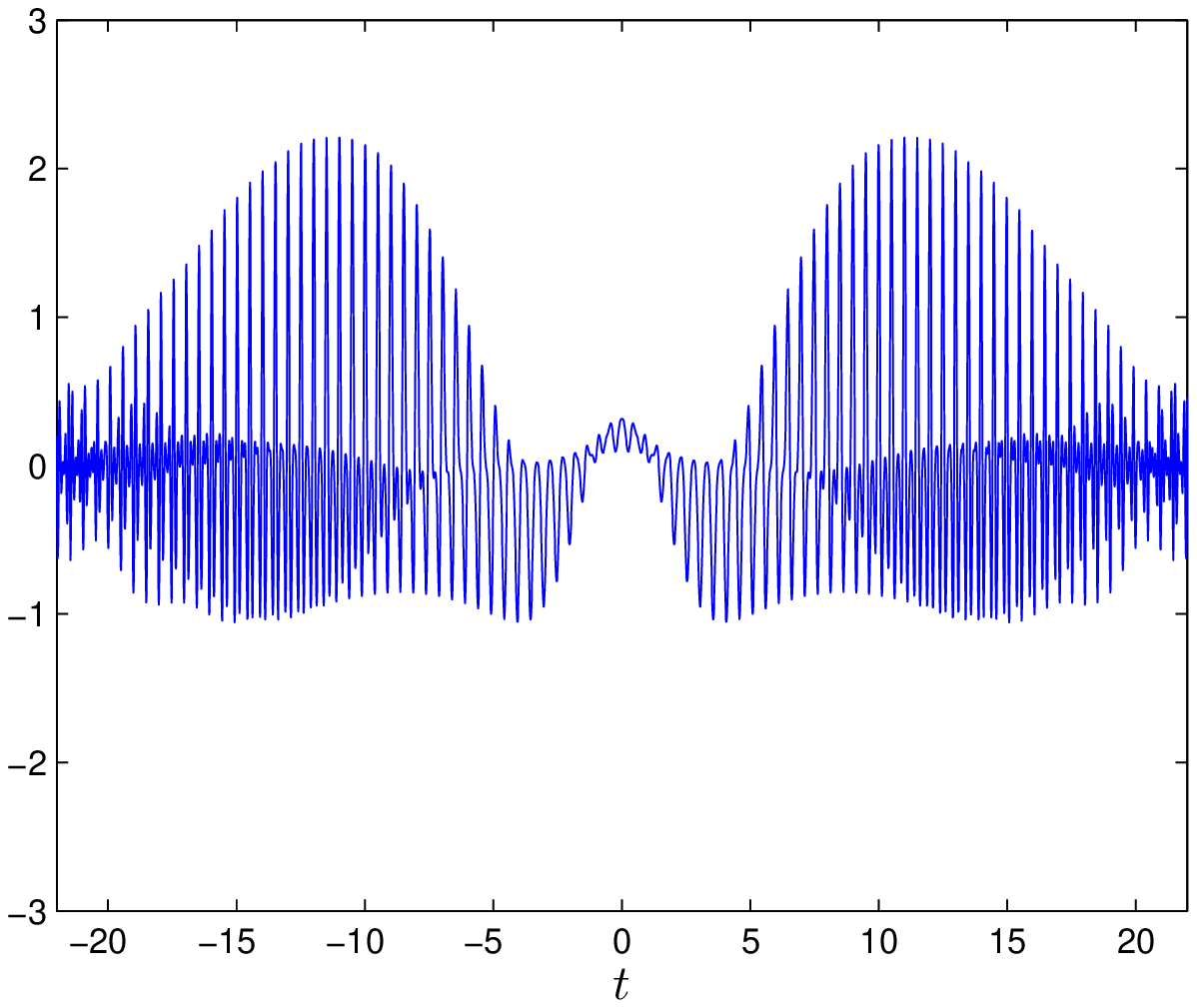}}
\subfloat[]{\includegraphics[scale=0.38, trim=0cm 0cm 0cm 0cm ]{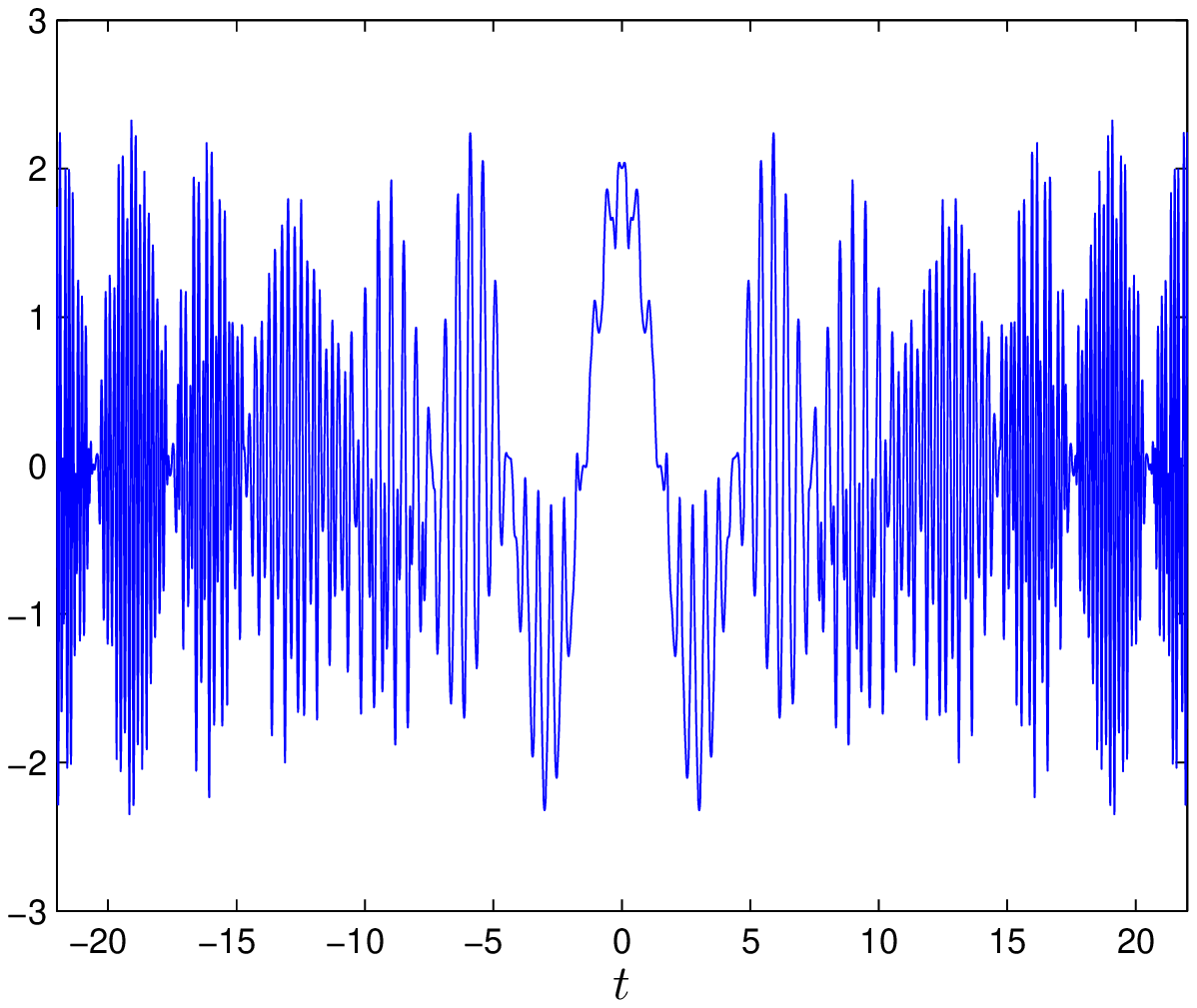}}
\subfloat[]{\includegraphics[scale=0.38, trim=0cm 0cm 0cm 0cm ]{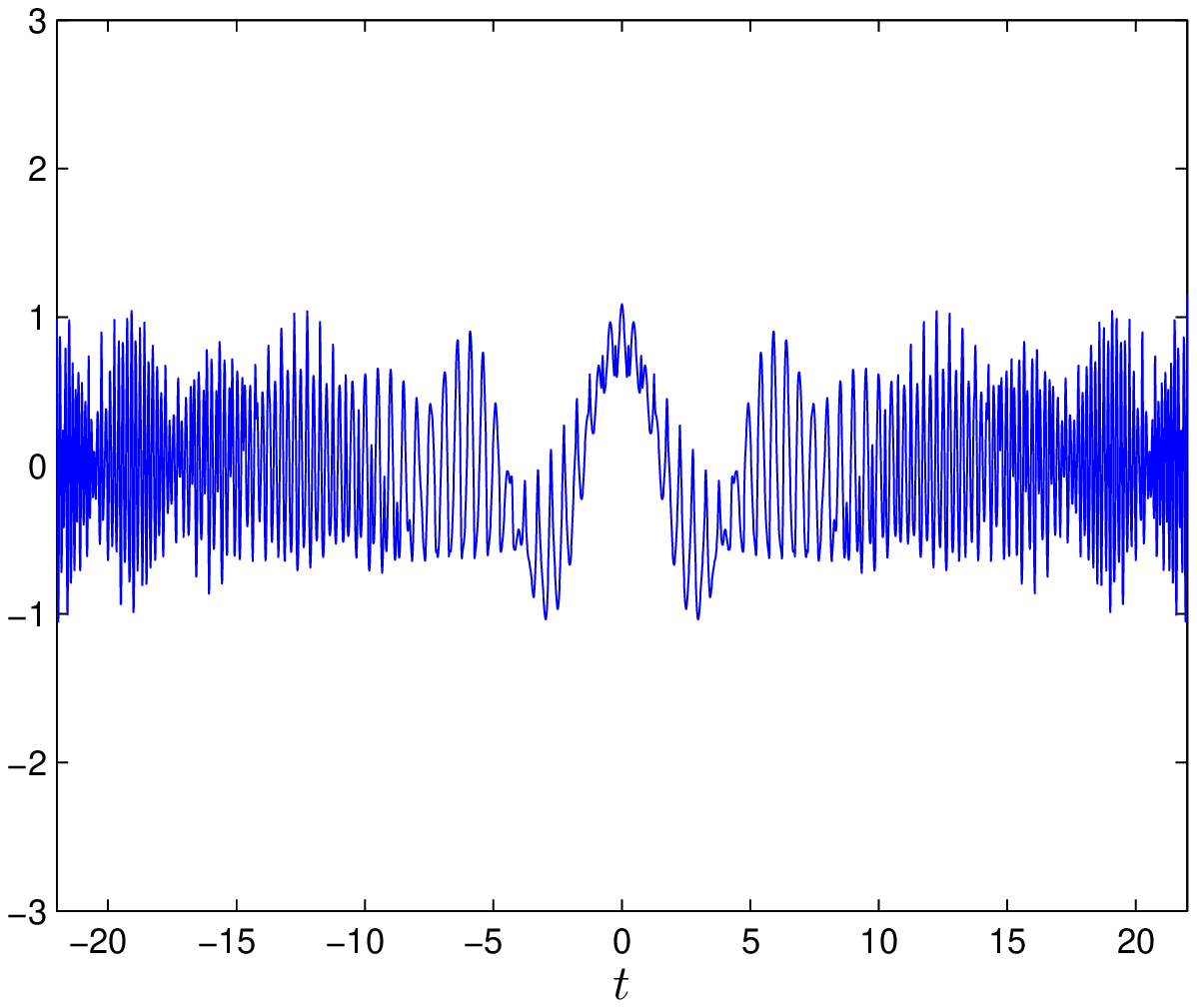}} \caption{Reconstructions of $f(t)$ from
its Gabor coefficients $c_{k,l}$. (a)~Without processing $c_{k,l}$. (b)~Consistent recovery, namely using
$d_{k,l}=(c*h_{\text{con}})_{k,l}$ as expansion coefficients. (c)~Minimax-regret recovery, namely using
$d_{k,l}=(c*h_{\text{mx}})_{k,l}$ as expansion coefficients.} \label{fig:ChirpRec}
\end{figure*}

\begin{figure*}\centering
\includegraphics[width=\textwidth, trim=0cm 3cm 0cm 0cm ]{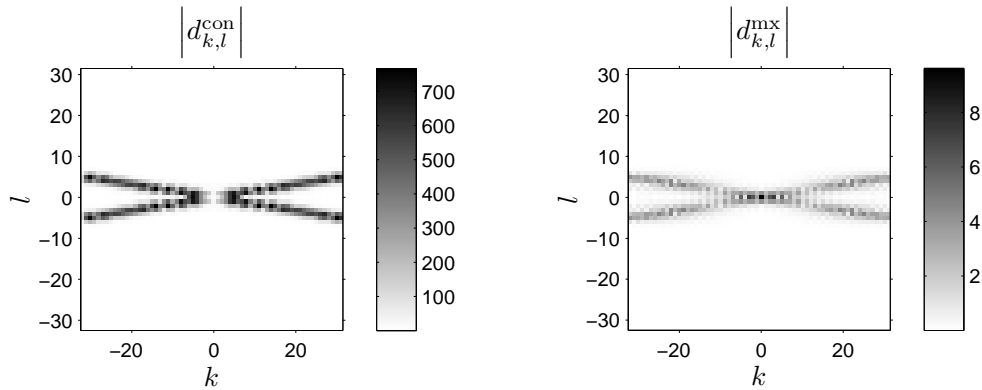}
\caption{The modulus of the expansion coefficients, $|d_{k,l}|$, corresponding to the consistent and minimax-regret
recovery methods.} \label{fig:d_con_mx}
\end{figure*}


\section{Conclusions}

In this paper we explored various techniques for recovering a signal from its non-invertible Gabor transform, where
the under-sampling factor is rational. Specifically, we studied situations where both the analysis and synthesis
windows of the transform are given, so that the only freedom is in processing the coefficients in the time-frequency
domain prior to synthesis. We began with the consistency approach, in which the recovered signal is required to
possess the same Gabor transform as the original signal. We then analyzed a minimax strategy whereby a reconstruction
with minimal worst case error is sought. Finally, we developed a recovery method yielding the minimal possible error
when the original signal is known to lie in some given Gabor space. We showed that all three techniques amount to
performing a 2D twisted convolution operation on the Gabor coefficients prior to synthesis. When the under-sampling
factor of the transform is an integer, this process reduces to standard convolution. We demonstrated our techniques
for Gaussian-window transforms in the context of recovering a chirp signal.

\appendices


\section{The Multiplication Property of the $\bPhi$ representation}
\label{sec:appPhiV}

Let $c_{k,l}$ and $d_{k,l}$ be two sequences having $\bPhi$ matrix-valued function representations $\bPhi^c$ and
$\bPhi^d$ respectively. Then the matrix-valued function $\bPhi$ associated with the twisted convolution $\twc{c}{d}$,
can be expressed as
\begin{equation}
\bPhi^{(\twc{c}{d})}(\om,x) = \bPhi^d(\om,x)\bPhi^c(\om,x).
\end{equation}
Indeed, let again $ab=q/p$ and let $r,s=0,\ldots,p-1$ be fixed, then
\begin{align*}
\bPhi^{(\twc{c}{d})}_{r,s}(\om,x) &= \sum_{k,l\in\Z} (\twc{c}{d})[s-r+pk,l]e^{-2\pi i abrl} e^{-2\pi i (blx+ak\om)}\\
&= \sum_{k,l\in\Z} \sum_{m,n\in\Z} c_{m,n}\,d_{s-r+pk-m,l-n}e^{-2\pi i ab(s-r+pk-m)n} e^{-2\pi i abrl} e^{-2\pi i (blx+ak\om)}\\
&= \sum_{u=0}^{p-1} \sum_{k,l\in\Z} \sum_{m,n\in\Z} c_{u+pm,n}d_{s-r-u+p(k-m),l-n} e^{-2\pi i ab(s-r-u)n} e^{-2\pi i abrl}e^{-2\pi i (blx+ak\om)}\\
&= \sum_{u=0}^{p-1} \sum_{k,l\in\Z}\sum_{m,n\in\Z} c_{s-u+pm,n}d_{u-r+pk,l} e^{-2\pi i ab(u-r)n} e^{-2\pi i abr(l+n)}e^{-2\pi i (b(l+n)x+a(k+m)\om)}\\
&= \sum_{u=0}^{p-1} \left(\sum_{k,l\in\Z} d_{u-r+pk,l} e^{-2\pi i abrl}e^{-2\pi i (blx+ak\om)} \right)
\left(\sum_{m,n\in\Z}c_{s-u+pm,n} e^{-2\pi i abun} e^{-2\pi i (bnx+am\om)} \right)\\
&= \sum_{u=0}^{p-1} \bPhi^d_{r,u}(\om,x) \bPhi^c_{u,s}(\om,x).
\end{align*}
Hence, $\bPhi^{(\twc{c}{d})}(\om,x) = \bPhi^d(\om,x)\bPhi^c(\om,x)$.


\section{Proof of Proposition~\ref{prop:equivalent basis}}
\label{sec:appProofEquivBasis}

Since $\G(v,a,b)$ is a Riesz basis for $\V$, there exist bounds $A>0$ and $B<\infty$ such that $A\bI_p \leq
\bPhi^{vv}(\om,x) \leq B\bI_p$, where $\bPhi^{vv}(\om,x)$ is the matrix-valued function associated to the sequence
$r_{vv}[k,l]$, defined in (\ref{eq:Phi-def}). The system $\G(w,a,b)$, with $w(t)=\sum_{k,l\in\Z}
h_{k,l}M_{bl}T_{ak}v(t)$, is a Riesz basis if and only if there exist constants $C>0$ and $D<\infty$ such that
\begin{equation}
C\bI_p \leq \bPhi^{ww}(\om,x) \leq D\bI_p,
\end{equation}
where $\bPhi^w(\om,x)$ is a matrix-valued function built from the cross-correlation sequence $r_{ww}[k,l] =
\inner{w}{M_{bl}T_{ak}w}$. By substituting $w(t)=\sum_{k,l\in\Z} h_{k,l}M_{bl}T_{ak}v(t)$ in $r_{ww}[k,l]$ one
obtains
\begin{align}
r_{ww}[k,l] &=  \sum_{y,z\in\Z} \sum_{m,n\in\Z} r_{vv}[y-m,z-n] h_{m,n} e^{-2\pi i ab(z-n)m} \overline{h_{y-k,z-l}}
e^{2\pi i ab(z-l)k}\nonumber\\
&=  \sum_{y,z\in\Z} (\twc{r_{vv}}{h})[y,z] \overline{h_{y-k,z-l}} e^{2\pi i ab(z-l)k}\nonumber\\
&= (\twc{h^{\ast}}{\twc{r_{vv}}{h}})[k,l]\,,
\end{align}
where $r_{vv}[m,n]=\inner{v}{M_{bn}T_{am}v}$ and $h^{\ast}[k,l] = \overline{h_{-k,-l}}$. It is easy to check, and
we leave it for the reader, that $\bPhi^{h^{\ast}}(\om,x) = \bPhi^h(\om,x)^H$. Therefore, using the relation
from Appendix~\ref{sec:appPhiV}, the $(r,s)$-entry of the matrix $\bPhi^{ww}(\om,x)$ is
\begin{equation}
\bPhi^{ww}_{r,s}(\om,x) = \left(\bPhi^h(\om,x) \bPhi^{vv}(\om,x) \bPhi^h(\om,x)^{H} \right)_{r,s}\,,
\end{equation}
where $\bPhi^h(\om,x)$ is a matrix-valued function associated to the sequence $h_{k,l}$ and defined in the
Proposition. Hence, if $\G(w,a,b)$ and $\G(v,a,b)$ are Riesz bases with bounds $C>0$, $D<\infty$, and $A>0$, $B<\infty$
respectively then
\begin{align}
\bPhi^h(\om,x) \bPhi^h(\om,x)^{H} &\geq \frac{1}{A} \bPhi^h(\om,x) \bPhi^{vv}(\om,x) \bPhi^h(\om,x)^{H} =
\frac{1}{A} \bPhi^{ww}(\om,x) \geq \frac{D}{A}\\
\bPhi^h(\om,x) \bPhi^h(\om,x)^{H} &\leq \frac{1}{B} \bPhi^h(\om,x) \bPhi^{vv}(\om,x) \bPhi^h(\om,x)^{H} = \frac{1}{B}
\bPhi^{ww}(\om,x) \leq \frac{C}{B}.
\end{align}
Therefore $\bPhi^h(\om,x)$ satisfies (\ref{eq:PhiPhi*}) with bounds $m=C/B$ and $M=D/A$.

On the other hand, if the sequence $h_{k,l}$ is such that (\ref{eq:PhiPhi*}) is satisfied, then
\begin{align}
\bPhi^{ww}(\om,x) &= \bPhi^h(\om,x) \bPhi^{vv}(\om,x) \bPhi^h(\om,x)^{H}
\geq A \bPhi^h(\om,x) \bPhi^h(\om,x)^{H} \geq Am \\
\bPhi^{ww}(\om,x)&= \bPhi^h(\om,x) \bPhi^{vv}(\om,x) \bPhi^h(\om,x)^{H} \leq B \bPhi^h(\om,x) \bPhi^h(\om,x)^{H} \leq
BM,
\end{align}
and so $\G(w,a,b)$ is a Riesz basis with bounds $C=Am$ and $D=BM$. It remains to show that $\G(w,a,b)$ and
$\G(v,a,b)$ span the same space. Every element of $\G(w,a,b)$ can be uniquely represented by a linear combinations of
the elements from $\G(v,a,b)$, since the latter is a Riesz basis. It suffices to show that $v(t)$ can be written as a
linear combination of the elements from $\G(w,a,b)$ (it will be a unique representation since $\G(w,a,b)$ is a Riesz
basis). Then, since Gabor spaces are closed under translation and modulations, other basis elements from $\G(v,a,b)$
will also admit a unique representation in terms of $\G(w,a,b)$. Let $g_{k,l}$ be the inverse of $h_{k,l}$ with
respect to $\nat$, meaning $\twc{h}{g}=\de$. The inverse exists because $h_{k,l}$ satisfies (\ref{eq:PhiPhi*}). Let
$\tilde{v}(t)=\sum_{m,n\in\Z} g_{m,n}M_{bn}T_{am}w(t)$. We will now show that $\tilde{v}(t)=v(t)$. Indeed,
\begin{align}
\tilde{v}(t) &= \sum_{m,n\in\Z} g_{m,n}M_{bn}T_{am}w(t) = \sum_{m,n\in\Z} \sum_{k,l\in\Z} g_{m,n}h_{k,l}M_{bn}T_{am}
M_{bl}T_{ak}v(t)\nonumber\\
&= \sum_{m,n\in\Z} g_{m,n} \sum_{k,l\in\Z} h_{k,l} e^{-2\pi i abml}M_{b(n+l)}T_{a(m+k)}v(t) \nonumber\\
&= \sum_{m,n\in\Z} \sum_{k,l\in\Z} g_{m-k,n-l}h_{k,l}e^{-2\pi i ab(m-k)l}M_{bn}T_{am}v(t) \nonumber\\
&= \sum_{m,n\in\Z} (\twc{h}{g})[m,n]M_{bn}T_{am}v(t) = v(t).
\end{align}

\bibliographystyle{IEEEtran}
\bibliography{Gabor}

\end{document}